\theoremstyle{plain}
\newtheorem{theorem}{Theorem}[section]
\newtheorem{maintheorem}{Theorem}
\newtheorem{maincor}[maintheorem]{Corollary}
\newtheorem{proposition}[theorem]{Proposition}
\newtheorem{lemma}[theorem]{Lemma}
\newtheorem{corollary}[theorem]{Corollary}
\theoremstyle{definition}
\newtheorem{definition}[theorem]{Definition}
\newcommand{\nc}{\newcommand}
\nc{\dmo}{\DeclareMathOperator}
\nc{\cA}{\mathcal{A}}
\nc{\cB}{\mathcal{B}}
\nc{\sB}{\mathscr{B}}
\nc{\C}{\mathbb{C}}
\nc{\cC}{\mathcal{C}}
\nc{\sC}{\mathscr{C}}
\nc{\BB}{\mathbb{B}}
\nc{\LL}{\mathcal{L}}
\nc{\bd}{\mathbf{d}}
\nc{\DD}{\mathbb{D}}
\nc{\cD}{\mathcal{D}}
\nc{\sD}{\mathscr{D}}
\nc{\bF}{\mathbb{F}}
\nc{\cF}{\mathcal{F}}
\nc{\cG}{\mathcal{G}}
\nc{\cI}{\mathcal{I}}
\nc{\cH}{\mathcal{H}}
\nc{\cK}{\mathcal{K}}
\nc{\cL}{\mathcal{L}}
\nc{\cM}{\mathcal{M}}
\nc{\bM}{\mathbf{M}}
\nc{\N}{\mathbb{N}}
\nc{\cN}{\mathcal{N}}
\nc{\cO}{\mathcal{O}}
\nc{\bp}{\mathbf{p}}
\nc{\bP}{\mathbb{P}}
\renewcommand{\P}{\mathbb{P}}
\nc{\cP}{\mathcal{P}}
\nc{\Q}{\mathbb{Q}}
\nc{\R}{\mathbb{R}}
\nc{\cS}{\mathcal{S}}
\nc{\cT}{\mathcal T}
\nc{\cU}{\mathcal U}
\nc{\bV}{\mathbb V}
\nc{\cX}{\mathcal{X}}
\nc{\cY}{\mathcal{Y}}
\nc{\Z}{\mathbb{Z}}
\nc{\disk}{\mathbb{D}}
\nc{\hyp}{\mathbb{H}}
\nc{\CP}{\mathbb{CP}}
\nc{\RP}{\mathbb{RP}}
\dmo{\Mod}{Mod}
\dmo{\PMod}{PMod}
\dmo{\LMod}{LMod}
\dmo{\Diff}{Diff}
\dmo{\Homeo}{Homeo}
\dmo{\dist}{dist}
\dmo\BDiff{BDiff}
\dmo\SO{SO}
\dmo\Sp{Sp}
\dmo\SL{SL}
\dmo\Out{Out}
\dmo\Aut{Aut}
\dmo\Inn{Inn}
\dmo\GL{GL}
\dmo\PGL{PGL}
\dmo\Gr{Gr}
\dmo\PSL{PSL}
\dmo\BHomeo{BHomeo}
\dmo\EHomeo{EHomeo}
\dmo\EDiff{EDiff}
\dmo\Disc{Disc}
\dmo\Aff{Aff}
\dmo\Teich{Teich}
\dmo{\Vol}{Vol}
\dmo{\Push}{Push}
\dmo{\Conf}{Conf}
\dmo{\PConf}{PConf}
\dmo{\PB}{PB}
\dmo{\Jac}{Jac}
\dmo{\Pic}{Pic}
\dmo{\Arf}{Arf}
\dmo{\Gal}{Gal}
\dmo{\SU}{SU}
\dmo{\spin}{spin}
\dmo{\even}{even}
\dmo{\odd}{odd}
\dmo{\orb}{orb}
\dmo{\pr}{pr}
\dmo{\lab}{lab}
\dmo{\Sym}{Sym}
\dmo{\Rad}{Rad}
\dmo{\Ind}{Ind}
\dmo{\Div}{Div}
\dmo{\Hur}{Hur}
\dmo\Hom{Hom}
\dmo\rank{rank}
\dmo\sig{sig}
\nc{\Span}[1]{\operatorname{Span}(#1)}
\dmo{\vcd}{vcd}
\dmo{\codim}{codim}
\dmo{\Res}{Res}
\dmo{\Ann}{Ann}
\dmo{\Int}{Int}
\dmo{\lcm}{lcm}
\dmo{\ab}{ab}
\dmo{\opp}{op}
\dmo{\End}{End}
\dmo{\Stab}{Stab}
\dmo{\id}{id}
\dmo{\im}{im}
\dmo\Fix{Fix}
\dmo{\Isom}{Isom}
\nc{\pair}[1]{\ensuremath{\left\langle #1 \right\rangle}}
\nc{\abs}[1]{\ensuremath{\left| #1 \right|}}
\nc{\action}{\circlearrowright}
\nc{\norm}[1]{\ensuremath{\left | \left | #1 \right | \right |}}
\nc{\abcd}[4]{\ensuremath{\left(\begin{array}{cc} #1 & #2 \\ #3 & #4 \end{array}\right)}}
\newcommand{\onto}{\twoheadrightarrow}
\nc{\normal}{\vartriangleleft}
\renewcommand{\bar}{\overline}
\newcommand{\sslash}{\mathbin{/\mkern-6mu/}}
\renewcommand{\epsilon}{\varepsilon}
\renewcommand{\tilde}{\widetilde}
\renewcommand{\le}{\leqslant}
\renewcommand{\ge}{\geqslant}
\nc{\margin}[1]{\marginpar{\scriptsize #1}}
\nc{\para}[1]{\medskip\noindent\textbf{#1.}}
\definecolor{myblue}{RGB}{102,153, 255}
\definecolor{myred}{RGB}{204,0,0}
\definecolor{mygreen}{RGB}{0,204,0}
\definecolor{myorange}{RGB}{255,102,0}
\definecolor{mypurple}{RGB}{138,43,226}
\nc{\red}[1]{\textcolor{myred}{#1}}
\nc{\blue}[1]{\textcolor{myblue}{#1}}
\dmo{\Sing}{Sing}
\dmo{\SpSym}{SpSym}
\dmo{\SpPic}{SpPic}
\dmo{\Br}{Br}
\dmo{\SBr}{SBr}
\dmo{\type}{type}
\date{September 29, 2025}
\title[$\pi_1$ of low-codimension strata]{Fundamental groups of strata of abelian differentials of low codimension}
\author{Nick Salter}
\address{NS: Department of Mathematics, University of Notre Dame, 255 Hurley Building, Notre Dame, IN 46556}
\email{nsalter@nd.edu}
\begin{document}
\begin{abstract}
    We show that for a stratum of projectivized abelian differentials with sufficiently many simple zeroes, the inclusion into the appropriate moduli space of pointed curves induces an injection at the level of orbifold fundamental group, thereby confirming part of a 1997 conjecture of Kontsevich-Zorich. Combined with previous work of the author with Calderon, this describes the orbifold fundamental groups of these strata as framed mapping class groups. Our approach is algebro-geometric in nature, and is based on Shimada's techniques for computing fundamental groups via morphisms of varieties that behave like fiber bundles away from loci of high codimension.
\end{abstract}

\maketitle

\section{Introduction}

Let $\cM_{g,p}$ denote the moduli space of $p$-pointed compact Riemann surfaces of genus $g$, and let $(\P)\Omega_{g,p}$ denote the (projectivized) Hodge bundle, with fiber over $C \in \cM_{g,p}$ the (projective) space of holomorphic $1$-forms on $C$. There is a {\em stratification} of $(\P)\Omega_{g,p}$ according to the multiplicities of the zeroes of the differentials. Let $\kappa = \{k_1, \dots, k_{p}\}$ be a partition of $2g-2$. Then $\cH_\kappa \subset \Omega_g$ denotes the corresponding stratum of Abelian differentials $(C,\omega)$, where $C \in \cM_g$ is a Riemann surface (with no distinguished points), and $\omega$ is a holomorphic $1$-form on $C$ whose zeroes have multiplicities $k_1,\dots, k_p$; let $(\P)\cH_\kappa$ denote the corresponding locus in $(\P)\Omega_g$. Assume $k_1 \ge \dots \ge k_n > k_{n+1} = \dots = k_p = 1$ (allowing $n=0$), and define the quantity 
\[
d(\kappa) = \sum_{i = 1}^n k_i;
\]
note this only counts the degrees of the zeroes of multiplicity $\ge 2$. There is an inclusion
\[
\rho: \P\cH_\kappa \into \cM_{g,p} 
\]
sending a projectivized differential $(C, [\omega])$ to the underlying curve $C$ marked with the divisor $(\omega)$. Our main theorem asserts that when the stratum includes sufficiently many simple zeroes, this is injective at the level of orbifold fundamental groups.

\begin{maintheorem}\label{theorem:main}
For $g\ge d(\kappa) + \max\{n+4, 7\}$, $\rho_*: \pi_1^{orb}(\P\cH_\kappa) \to \pi_1^{orb}(\cM_{g,p})$ is injective.  
\end{maintheorem}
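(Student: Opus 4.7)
The proof is based on Shimada's technique for computing fundamental groups via algebraic morphisms that behave as fiber bundles away from a locus of codimension at least two. The underlying principle is that if $f \colon \widetilde X \to X$ is a surjective morphism of smooth irreducible complex algebraic varieties whose restriction over a Zariski open $U \subset X$, with $\codim_X(X \setminus U) \ge 2$, is a locally trivial topological fiber bundle with connected fiber $F$, then the sequence
\[
\pi_1(F) \to \pi_1(\widetilde X) \to \pi_1(X) \to 1
\]
is exact; in particular, $f_*$ is an isomorphism on $\pi_1$ when $F$ is simply connected.

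The plan is to reduce the injectivity of $\rho_*$ to such an isomorphism by constructing an auxiliary variety $\cY$ fitting into a factorization $\P\cH_\kappa \hookrightarrow \cY \xrightarrow{f} \cM_{g,p}$ of $\rho$. A natural candidate is
\[
\cY = \bigl\{(C, p_1, \dots, p_p, [\omega]) \in \P\Omega_{g,p} : \omega \text{ vanishes to order} \ge k_i \text{ at } p_i,\ i = 1, \dots, n\bigr\},
\]
which parameterizes pointed curves together with a projectivized differential whose vanishing is prescribed only at the $n$ marked zeros of multiplicity $\ge 2$. The projection $f \colon \cY \to \cM_{g,p}$ has generic fiber $\P H^0(C, K_C(-\sum_{i=1}^n k_i p_i))$, a projective space of dimension $g - d(\kappa) - 1$, simply connected once $g \ge d(\kappa) + 2$. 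The stratum $\P\cH_\kappa$ sits inside $\cY$ as the closed subvariety where the remaining points $p_j$ (for $j > n$) are further required to be zeros of $\omega$.

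The argument then has two steps. First, I would apply Shimada's theorem to $f$ to conclude $\pi_1(\cY) \xrightarrow{\sim} \pi_1(\cM_{g,p})$. Second, I would show that the closed inclusion $\P\cH_\kappa \hookrightarrow \cY$ is $\pi_1$-injective by organizing both spaces as fibrations over $\cM_{g,n}$: the generic fiber of $\cY \to \cM_{g,n}$ is a full projective space of differentials, while the generic fiber of $\P\cH_\kappa \to \cM_{g,n}$ is a Hurwitz-type cover of the discriminant complement inside that projective space, parameterizing orderings of the $p-n$ simple zeros of $[\omega]$. Comparing the two Shimada exact sequences, and using the natural compatibility with $f$, should then force the desired injectivity.

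The principal obstacle lies in the codimension analysis. For the first step, one must show that the Brill--Noether-type discriminant in $\cM_{g,p}$---the locus where $|K_C - \sum_{i=1}^n k_i p_i|$ either has abnormally large dimension or develops base points at the remaining marked positions---has codimension $\ge 2$. For the second step, one must verify that the discriminant inside the fibers of $\cY \to \cM_{g,n}$, where $[\omega]$ fails to have $p-n$ distinct simple zeros away from $p_1,\dots,p_n$, has codimension $\ge 2$. The hypothesis $g \ge d(\kappa) + \max\{n+4, 7\}$ is precisely calibrated to supply both estimates: the term $d(\kappa)+n+4$ provides the cushion dictated by the combinatorics of $\kappa$, while the absolute bound $d(\kappa)+7$ handles low-genus anomalies such as hyperelliptic or extra-automorphism loci.
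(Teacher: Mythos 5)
Your overall skeleton---fiber the stratum over a moduli space of curves with fewer marked points, apply Shimada's theorem to get a short exact sequence, and reduce injectivity of $\rho_*$ to injectivity on the fibers---is essentially the paper's first reduction (the paper fibers $\P\cH_\kappa$ over $\cM_g$ and then fibers $\abs{K_C}_\kappa$ over $\Conf_{\kappa^{nr}}(C)$; your single fibration over $\cM_{g,n}$ packages the same two steps, and your auxiliary $\cY$ plays roughly the role of the Birman exact sequence for $\cM_{g,\kappa}\to\cM_g$). The codimension estimates you flag as ``the principal obstacle'' are indeed needed, but they are the routine part. The genuine gap is in the sentence ``Comparing the two Shimada exact sequences, and using the natural compatibility with $f$, should then force the desired injectivity.'' This is where the entire difficulty of the theorem lives, and no such formal comparison works. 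After the reduction, what must be shown is that $\pi_1(\abs{K_C(-D)}_{red}^\circ)\to \Br_{p-n}(C\setminus D)$ is injective, where $\abs{K_C(-D)}_{red}^\circ$ is the fiber of the Abel--Jacobi map $\Conf_{p-n}(C\setminus D)\to\Pic^{2g-2}(C)$ over the \emph{specific} point $K_C$. Shimada's theorem only controls the fiber over a \emph{general} line bundle $L$ (and even there it asserts only that a Zariski--van Kampen quotient $\pi_1(F)\sslash K$ injects, so the monodromy group must separately be shown to act trivially, via condition (C4) on reducedness of the discriminant in nearby fibers). The canonical bundle is not general, and nothing in the fibration formalism rules out extra classes in $\pi_1$ of the special fiber dying in the total space.

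The paper bridges this gap with machinery your proposal does not mention: (i) the ``Shimada--Severi method,'' which uses the Zariski hyperplane section theorem to replace $\abs{K_C(-D)}_{red}^\circ$ and $\abs{L(-D)}_{red}^\circ$ by plane curve complements, checks (via the Pl\"ucker-generality and ``sufficiently general position'' analysis of divisor types (D1)--(D6)) that these plane sections are equisingular, and invokes Harris' irreducibility of the Severi variety to produce an equisingular deformation giving an \emph{abstract} isomorphism $\pi_1(\abs{K_C(-D)}_{red}^\circ)\cong\pi_1(\abs{L(-D)}_{red}^\circ)$; (ii) external input from the framed mapping class group results of \cite{strata3} and \cite{CImonodromy} to show the image of $\pi_1(\abs{K_C(-D)}_{red}^\circ)$ is all of $\ker\lambda_{\kappa,D,*}$; and (iii) a Hopfian argument (finitely generated residually finite groups admit no proper surjective self-maps) to upgrade the resulting surjective endomorphism to an isomorphism and conclude injectivity. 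Relatedly, your account of the genus bound is off: the $d(\kappa)+7$ does not come from hyperelliptic or automorphism loci (those only need $g\ge 4$) but from the codimension-$3$ count excluding hyperflexes, tritangents, and the other degenerate divisor types needed for Pl\"ucker generality, while $d(\kappa)+n+4$ comes from very ampleness of $K_C(-D)$ for all $D$ of type $\kappa^{nr}$.
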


The codimension of $\P\cH_\kappa \subset \P\Omega_g$ is computed to be the quantity $c(\kappa) = \sum_{i=1}^p (k_i-1)$, so that the relation $d(\kappa) = c(\kappa) + n$ holds. For fixed genus $g$, the bounds in \Cref{theorem:main} thus enforce that the stratum $\P\cH_\kappa$ be of sufficiently low codimension in $\P\Omega_g$, or equivalently that a sufficient proportion of the zeroes of differentials in the stratum be simple.

The image of $\rho_*$ in $\Mod_{g,p}$ was previously described by the author and Calderon in \cite{strata3}. Recall that a {\em framing} of a $d$-manifold is a trivialization of its tangent bundle, or equivalently $d$ everywhere linearly independent vector fields. When $S_{g,p}$ is an oriented surface of genus $g$ with $p$ punctures, any non-vanishing vector field or $1$-form induces a well-defined isotopy class of framing. The mapping class group $\Mod_{g,p}$ (the group of orientation-preserving diffeomorphisms of $S_{g,p}$ up to isotopy) acts on the set of isotopy classes of framings. For such a class $\phi$, there is an associated stabilizer subgroup known as the {\em framed mapping class group} $\Mod_{g,p}[\phi]$. 

For $(C,[\omega])$ a projectivized differential, any representative $\omega$ therefore determines a framing of $C \setminus (\omega) \cong S_{g,p}$; different choices of representatives $\lambda \omega$ for $\lambda \in \C^*$ yield isotopic framings. It follows that the monodromy $\rho'_*: \pi_1^{orb}(\P\cH_\kappa) \to \Mod_{g,p}$ of the universal family of translation surfaces over $\P\cH_\kappa$ is valued in the associated framed mapping class group $\Mod_{g,p}[\phi_{\cH_\kappa}]$. As $\pi_1^{orb}(\cM_{g,p}) = \Mod_{g,p}$, the maps $\rho_*$ and $\rho'_*$ are identified. In \cite[Theorem A]{strata3}, it was shown that for $g \ge 5$, $\im(\rho'_*)= \Mod_{g,p}[\phi_{\cH_\kappa}]$, for any (non-hyperelliptic) stratum of Abelian differentials.

\begin{maincor}\label{maincor}
    For $g \ge d(\kappa) + \max\{n+4, 7\}$, there is an isomorphism
\[
\pi_1^{orb}(\P \cH_\kappa) \cong \Mod_{g,p}[\phi_{\cH_\kappa}].
\]
\end{maincor}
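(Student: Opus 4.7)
The plan is to derive the corollary formally from \Cref{theorem:main} together with the image computation \cite[Theorem A]{strata3} recalled in the discussion preceding the statement. Under the identification $\pi_1^{orb}(\cM_{g,p}) = \Mod_{g,p}$, the map $\rho_*$ coincides with the geometric monodromy $\rho'_*$ of the universal translation surface bundle over $\P\cH_\kappa$. The hypothesis in particular forces $g \ge 7 \ge 5$, so \cite[Theorem A]{strata3} identifies $\im(\rho_*) = \im(\rho'_*)$ with the framed mapping class group $\Mod_{g,p}[\phi_{\cH_\kappa}]$, provided the stratum is non-hyperelliptic. Granted this, \Cref{theorem:main} gives injectivity of $\rho_*$, and therefore the co-restriction $\rho_*\colon \pi_1^{orb}(\P\cH_\kappa) \to \Mod_{g,p}[\phi_{\cH_\kappa}]$ is the claimed isomorphism.

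The one auxiliary point I would verify is that the numerical hypothesis excludes the two hyperelliptic components $\cH^{hyp}(2g-2)$ and $\cH^{hyp}(g-1,g-1)$, so that \cite[Theorem A]{strata3} applies without qualification. Both of these have $n = p$, i.e., no simple zeroes. But the relation $2g - 2 = d(\kappa) + (p - n)$ together with the bound $g \ge d(\kappa) + n + 4$ yields
\[
p - n \;=\; 2g - 2 - d(\kappa) \;\ge\; g + n + 2,
\]
so $\P\cH_\kappa$ contains at least $g + n + 2 \ge 9$ simple zeroes, and in particular is not hyperelliptic.

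I do not anticipate any real difficulty: the deep content is \Cref{theorem:main}, and the corollary is essentially a two-line composition of the injectivity of $\rho_*$ with the previously established surjectivity onto $\Mod_{g,p}[\phi_{\cH_\kappa}]$. The only thing that could conceivably go wrong --- namely that the hypothesis places one in a hyperelliptic component, where \cite[Theorem A]{strata3} would require a different formulation --- is ruled out by the simple counting argument above.
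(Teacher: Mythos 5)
Your proposal is correct and matches the paper's (essentially implicit) argument: the corollary is just the composition of the injectivity from Theorem~\ref{theorem:main} with the surjectivity onto $\Mod_{g,p}[\phi_{\cH_\kappa}]$ from \cite[Theorem A]{strata3}. Your auxiliary check that the numerical hypothesis forces $p-n \ge g+n+2$ simple zeroes, hence excludes the hyperelliptic components, is a point the paper leaves implicit (cf.\ its footnote on connectedness) and is a worthwhile addition.
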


\para{The conjecture of Kontsevich-Zorich} In their 1997 paper \cite{kzconj}, Kontsevich and Zorich pose the problem of understanding the homotopy type of strata, motivated by their evident close connection with moduli spaces of Riemann surfaces and the mapping class group. In the form it appears in the paper \cite[page 7]{kzconj}, it is conjectured that each connected component of a stratum\footnote{In the seminal paper \cite{kz}, Kontsevich-Zorich classify the path components of every stratum $\cH_\kappa$. Following their classification, any stratum with at least one simple zero is connected, so the issue of distinct path components does not arise in this paper.} should be an orbifold $K(\pi,1)$ space with orbifold fundamental group ``commensurable to some mapping class group''. \Cref{maincor} shows that for strata of sufficiently low codimension, this conjectural description of the orbifold fundamental group is correct.

\para{Codimension: low vs. high} The theory of {\em period coordinates} shows that the dimension of $\cH_\kappa$ coincides with the relative Betti number $\dim(H_1(C,(\omega);\C)) = 2g+p-1$; the dimension of $\P\cH_\kappa$ thus drops to $2g+p-2$. As touched on briefly above, the codimension of $\P\cH_\kappa \subset \P\Omega_g$ can be computed from this to be $c(\kappa) = \sum_{i = 1}^n (k_i-1) = d(\kappa) - n$.

It is worth observing that projectivized strata $\P\cH_\kappa$ in genus $g$ have a wide range of dimensions, from $4g-4$ for the principal stratum (differentials with all simple zeroes), down to $2g-1$ for the minimal stratum (differentials with a single zero of order $2g-2$). Sitting in the middle is the dimension $3g -3$ of the moduli space $\cM_g$ itself. Thus, as the partition coarsens and multiplicities increase, the strata go from having large-dimensional fibers over (most of) $\cM_g$, to being high-codimension loci inside $\cM_g$, supported only on curves with Weierstrass points of exceptional multiplicity. It is reasonable to expect that topological properties of strata (e.g. asphericality) could vary as one moves between low and high codimension settings.

The results of this paper take place at the top end of the dimension spectrum. A rough heuristic is that in this range, the projection $\P\cH_\kappa \to \cM_g$ should ``behave like a fibration''. Although this is far from being literally true, the philosophy of the work of Shimada underlying our approach is that this should be true away from a locus of positive codimension. A precise accounting of what goes wrong where leads to Shimada's results \Cref{theorem:shimada} and \Cref{prop:montriv}, which together provide the technical toolkit we use, allowing us to obtain descriptions of fundamental groups that fit into short exact sequences like those for a fibration.

\para{A counterexample in high codimension} It is also quite interesting to compare our results with the limited set of things that are known for strata of high codimension. A result of Looijenga-Mondello \cite{lm} computes the orbifold fundamental group of the stratum component $\cH_4^{odd}$ - this is the stratum in genus $3$ consisting of differentials with a single zero of multiplicity $4$ on a non-hyperelliptic curve. They find that
\[
\pi_1^{orb}(\cH_4^{odd}) \cong A(E_6)/Z,
\]
where $A(E_6)$ is the {\em Artin group of type $E_6$} and $Z$ is its center. On the other hand, a result of Wajnryb \cite{wajnryb} shows that under the so-called {\em standard} homomorphism $s: A(E_6) \to \Mod_3$ (where the generators of $A(E_6)$ are sent to Dehn twists about a configuration of simple closed curves on a surface of genus $3$ whose intersection pattern is the $E_6$ Dynkin diagram), there is a non-central element in the kernel of $s$. It is not hard to see that the monodromy map $\pi_1^{orb}(\cH_4^{odd}) \to \Mod_3$ induces the standard homomorphism. Taken together, this shows that the result of \Cref{theorem:main} fails to hold for $\cH_4^{odd}$. The work of Giannini \cite{giannini,giannini2} shows that the same is true for two other strata $\cH_{3,1}$ and $\cH_6^{even}$ both closely connected to Artin groups, and that moreover the kernels are large (e.g. contain nonabelian free groups). 

\para{Shimada's methods} The methods of this paper are deeply indebted to work of Shimada. In a series of papers \cite{shimada95,shimada03,shimada032} culminating in \cite{shimada}, Shimada develops a set of techniques to compute fundamental groups of algebraic varieties by using morphisms $f: X \to Y$ that behave like fiber bundles away from loci of high codimension. One of his motivating applications was to give a detailed proof of a result originally claimed by Dolgachev-Libgober \cite[p. 9]{DL} concerning the following question: let $\abs{L}$ be a very ample complete linear system of degree $d$ on a Riemann surface $C$. {\em What is the fundamental group of $\abs{L}_{red}$, the open sublocus of reduced divisors?} 

As observed by Dolgachev-Libgober, $\abs{L}_{red}$ is contained in the fiber of the Abel-Jacobi map, and so at the level of fundamental groups, $\pi_1(\abs{L}_{red})$ is a subgroup of the ``simple braid group'', group-theoretically given as the kernel of a map $\lambda_*: \Br_d(C) \to H_1(C;\Z)$; here $\Br_d(C) = \pi_1(\Conf_d(C))$ is the fundamental group of the configuration space of $d$ points on $C$, viewed as the space of all reduced divisors of degree $d$. (See the discussion in \Cref{ss:workingenv} for more details). 

The Abel-Jacobi map $\lambda: \Conf_d(C) \to \Pic^d(C)$ is not literally a fiber bundle, since the fibers $\abs{L}_{red}$ need not all be homeomorphic (nor even equidimensional). This reflects the fact that the locus of non-reduced divisors is the {\em dual variety} to $C$ as embedded in $\abs{L}^\vee$, and the topology of the dual variety is sensitive to the geometry of the embedding. If it {\em were}, the long exact sequence in homotopy would degenerate to a short exact sequence of fundamental groups, furnishing an {\em isomorphism} $\pi_1(\abs{L}_{red}) \cong \ker(\lambda_*)$. Shimada's work shows that when the degree $d$ is sufficiently large, $\lambda$ is ``close enough'' to a fibration for the expected short exact sequence to still exist. 

\para{Overview of the argument} This example is at the heart of our approach to \Cref{theorem:main}. Essentially, we carry out a version of this argument in families. In the case of the principal stratum $\P\cH_{\mathbf{1}}$ (differentials with $2g-2$ simple zeroes), the fiber over $C \in \cM_g$ of the projection map $\P\cH_{\mathbf{1}} \to \cM_g$ is the space $\abs{K_C}_{red}$ of reduced canonical divisors. In \Cref{S:global}, we use Shimada's tools to show that this projection induces a short exact sequence on fundamental groups, reducing the question of injectivity of $\rho: \pi_1^{orb}(\P\cH_{\mathbf{1}}) \to \Mod_{g,2g-2}$ to the level of the generic fiber $\abs{K_C}_{red}$. The theory of framed mapping class groups implies that injectivity at this level would follow from knowing $\pi_1(\abs{K_C}_{red}) \cong \ker(\lambda_*)$ was a simple braid group for $C$ generic. 

To do this, we again follow ideas developed by Shimada in \cite{shimada}. His methods readily show that there is a short exact sequence of the desired sort for the Abel-Jacobi map in degree $2g-2$, where the kernel subgroup is $\pi_1(\abs{L}_{red})$ for some {\em general} $L \in \Pic^{2g-2}(C)$. To show that one can replace a general $L$ with some specific line bundle $L_0$ (e.g. $K_C$), Shimada uses the connectedness of the Severi variety as established by Harris \cite{harris} to find an equisingular deformation between generic $2$-plane sections of $\abs{L}_{red}$ and $\abs{L_0}_{red}$, inducing an isomorphism at the level of the fundamental group. In the case of $L_0 = K_C$, this can be accomplished so long as the Weierstrass points of $C$ are sufficiently generic. We call this approach to obtaining results for specific fibers the {\em Shimada-Severi method}.

The extension to other strata $\P \cH_\kappa$ is accomplished by a more careful study of the fibers $\abs{K_C}_\kappa$ of $\P\cH_\kappa \to \cM_g$. The basic insight is that when the stratum has a relatively large proportion of simple zeroes, $\abs{K_C}_\kappa$ can itself be described as something close to a fiber bundle, with base a configuration space tracking the positions of the points in the {\em non}reduced subdivisor $D$; the fibers are now again describable as a locus $\abs{K_C(-D)}_{red}^\circ$ of reduced divisors that moreover avoid the points of $D$. In \Cref{S:SESs} we carry out this analysis, obtaining a group-theoretic description of $\abs{K_C}_\kappa$ using Shimada's techniques as applied to this setup. Finally in \Cref{section:toKC}, we show that when $C$ is sufficiently general, the Shimada-Severi method can be used to relate $\abs{K_C(-D)}_{red}^\circ$ with a general $\abs{L}_{red}^\circ$. 

\para{Related work: quadratic differentials and stability conditions} Intriguingly, the counterpart of \Cref{theorem:main} is known for the principal stratum of quadratic differentials, by work of King-Qiu \cite{kingqiu}. Their interest in this space arises from a connection between quadratic differentials and {\em stability conditions}, as pioneered by Bridgeland-Smith \cite{bridgelandsmith}. While their approach is formulated in the language of stability conditions and its environs, an important step in their argument is furnished by work of Qiu-Zhou \cite{QZ} who give a simple and elegant presentation for the simple braid group (in their work, called the {\em twist braid group}). See also \cite{qiu2} for further results of this sort.

\para{Acknowledgements} The author is supported in part by NSF grant no. DMS-2338485. It is a pleasure to thank Anatoly Libgober, Eduard Looijenga, and Eric Riedl for helpful discussions. Finally, the author would like to extend his heartfelt gratitude to an anonymous referee of the paper \cite{CImonodromy} for alerting him to Shimada's work.

\subsection{Notational conventions and basic terminology}\label{SS:notation} For convenience, here we discuss the notational conventions that will be used throughout the paper. 

The symbol $\kappa$ will always denote an unordered partition 
\[
\kappa = \{k_1, \dots, k_n, k_{n+1}, \dots, k_p\} \vdash 2g-2
\]
of $2g-2$, with all $k_i \ge 1$ and $k_i \ge 2$ if and only if $1 \le i \le n$. Partitions of general integers $d$ will generally be denoted $\mu= \{k_1,\dots, k_n, k_{n+1}, \dots, k_p\}$, and will adhere to the same convention that $k_i \ge 2$ if and only if $i \le n$. The letters $n$ and $p$ will be reserved throughout for their meaning in the context of partitions. Write
\[
\mu^{nr} = \{k_1, \dots, k_n\}
\]
for the ``nonreduced subpartition'' of entries $k_i \ge 2$. We define the quantities
\[
c(\mu) = \sum_{i= 1}^n (k_i-1) \qquad \mbox{and} \qquad d(\mu) = \sum_{i = 1}^n k_i = c(\mu) + n.
\]
Note that $d(\mu)$ is {\em not} the quantity $\deg(\mu) = \sum_{i =1}^p k_i$, only the degree of the nonreduced part $\mu^{nr}$.

Throughout this paper, all divisors are taken to be effective without further comment. For a divisor
\[
D = \sum_{i =1}^p k_i P_i
\]
with all $P_i$ distinct, the {\em type} of $D$ is the associated partition 
\[
\type(D) := \{k_1, \dots, k_p\} \vdash \deg(D).
\]

Assuming as always that $k_i \ge 2$ if and only if $1 \le i \le n$, define
\[
D^{nr} = \sum_{i=1}^n k_i P_i
\]
as the subdivisor of nonreduced points. 

Let $\mu$ be a partition. We define
\[
\cM_{g,\mu} = \{(C,D) \mid C \in \cM_g,\ \type(D) = \mu\}
\]
to be the space of compact Riemann surfaces of genus $g$ together with a divisor of type $\mu$. We will extend this notation scheme to configuration spaces as well, writing $\Conf_\mu(C)$ for the space of all divisors of type $\mu$ on $C$. 

Finally, let $\Pic^d(C)$ denote the Picard group of line bundles of degree $d$ on $C$. For a line bundle $L \in \Pic^d(C)$ and a partition $\mu$ of $d$, we write
\[
\abs{L}_\mu = \{D \in \abs{L} \mid \type(D) = \mu\}
\]
for the divisors in the linear system $\abs{L}$ of type $\mu$. In particular, let $\mathbf{1}$ denote the all-$1$'s partition of $d$; then define
\[
\abs{L}_{red} := \abs{L}_{\mathbf{1}}
\]
as the open locus of reduced divisors in $\abs{L}$. 

\section{The work of Shimada}\label{S:shimada}
Here we recall the work of Shimada of \cite{shimada} that underpins our analysis. Let $X$ and $Y$ be smooth varieties and $f: X \to Y$ a dominant morphism for which the general fiber is connected. Following \cite[p. 598]{shimada}, there exists a proper Zariski closed subset $\Sigma \subset Y$ such that 
\[
f^\circ: X^\circ \to Y^\circ
\]
is a $\cC^\infty$ locally trivial fiber bundle, where
\[
Y^\circ := Y \setminus \Sigma, \qquad X^\circ := f^{-1}(Y^\circ)\qquad \mbox{and} \qquad f^\circ:= f|_{X^\circ}: X^\circ \to Y^\circ.
\]
Choose a basepoint $b \in Y^\circ$, set $F_b := f^{-1}(b)$, and choose a lifted basepoint $\tilde b \in F_b$. Let $\iota: F_b \to X$ denote the inclusion. 

Shimada's description of the kernel of $\iota_*: \pi_1(F_b, \tilde b) \to \pi_1(X,\tilde b)$ makes use of a group-theoretic construction known as the {\em Zariski van Kampen quotient}. Let $H$ be a group, and let $\Gamma$ be a subgroup of $\Aut(H)$; here $\Aut(H)$ acts on $H$ on the right. Then the subgroup 
\[
N_\Gamma:= \pair{h^{-1} h^\gamma \mid h \in H, \gamma \in \Gamma}
\]
is easily seen to be normal; define
\begin{equation}
    \label{def:ZvK}
    H \sslash \Gamma := H / N_\Gamma.
\end{equation}

Returning to the study of $f: X \to Y$, since $f^\circ: X^\circ \to Y^\circ$ is locally trivial, there is an action of $\pi_1(X^\circ, \tilde b)$ on $\pi_1(F_b, \tilde b)$. This is given structurally as a homomorphism
\[
\psi: \pi_1(X^\circ, \tilde b) \to \Aut(\pi_1(F_b, \tilde b)),
\]
which Shimada calls the {\em lifted monodromy}.
Shimada's theorem can then be stated as follows. We remark that \Cref{theorem:shimada} is a special case of \cite[Corollary 1.1]{shimada}; in general the condition $(Z')$ can be relaxed.

\para{A note on ``codimension''} Readers of a more topological predilection should be aware that all dimensions and codimensions in this paper are taken over $\C$.  

\begin{theorem}[Cf. Corollary 1.1 of \cite{shimada}]\label{theorem:shimada}
    Let $X$ and $Y$ be smooth varieties and let $f: X \to Y$ be a dominant morphism whose general fiber is connected. Suppose the following three conditions are satisfied:
    \begin{enumerate}
        \item[(C1)] The locus $\Sing(f)$ of critical points of $f$ is of codimension $\ge 2$ in $X$,
        \item[(C2)] There exists a Zariski closed subset $\Xi_0$ of $Y$ with codimension $\ge 2$ such that $F_y:= f^{-1}(y)$ is nonempty and irreducible for any $y \in Y \setminus \Xi_0$,
        \item[(Z')] $\pi_2(Y,b) = 0$.
    \end{enumerate}
    Let $i_{X*}: \pi_1(X^\circ,\tilde b) \to \pi_1(X,\tilde b)$ be induced by the inclusion map $i_X: X^\circ \into X$. Then there is an exact sequence
    \begin{equation}
        \label{eq:mainSES}
        1 \to \pi_1(F_b, \tilde b)\sslash \ker(i_{X*}) \xrightarrow{\iota_*} \pi_1(X, \tilde b) \xrightarrow{f_*} \pi_1(Y,b) \to 1.
    \end{equation}
\end{theorem}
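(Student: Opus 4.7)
The plan is to derive the exact sequence from the homotopy long exact sequence of the locally trivial fibration $f^\circ \colon X^\circ \to Y^\circ$, then transfer the result back to $X$ and $Y$ using the codimension bounds (C1) and (C2), with (Z') controlling the left end of the LES. The resulting statement is a relative Zariski--van Kampen theorem: the Zariski--van Kampen quotient $\pi_1(F_b) \sslash \ker(i_{X*})$ is the expected replacement for the naive fiber group $\pi_1(F_b)$ that would appear if $f$ were a genuine fibration.

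The first step is to write down the homotopy LES of $f^\circ$:
\[
\pi_2(Y^\circ, b) \to \pi_1(F_b, \tilde b) \xrightarrow{\iota^\circ_*} \pi_1(X^\circ, \tilde b) \xrightarrow{f^\circ_*} \pi_1(Y^\circ, b) \to 1,
\]
using (Z') to control the $\pi_2$ term. Since $X \setminus X^\circ$ and $\Sigma$ are proper Zariski-closed subsets of smooth varieties, both $\pi_1(X^\circ) \to \pi_1(X)$ and $\pi_1(Y^\circ) \to \pi_1(Y)$ are surjective, hence so is $f_*$. The content of (C1) and (C2) is that the genuinely problematic part of the discriminant for the passage to $\pi_1$ is its codimension-one (divisorial) part: the standard fact that removing a codimension-$\ge 2$ subset from a smooth variety does not change $\pi_1$ handles $\Xi_0$ in $Y$ and $\Sing(f)$ in $X$.

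The core step is the identification of $\ker(f_*)$ with $\pi_1(F_b) \sslash \ker(i_{X*})$. Given $\alpha \in \ker(f_*)$, lift it to $\tilde \alpha \in \pi_1(X^\circ)$; then $f^\circ_*(\tilde\alpha) \in \ker(i_{Y*})$, which after the (C2) reduction is normally generated by meridians around divisorial components of $\Sigma$. Each such meridian lifts through the fibration structure to a meridian loop in $X^\circ$ that lies in $\ker(i_{X*})$. Modifying $\tilde\alpha$ by such lifts reduces to the case $f^\circ_*(\tilde\alpha) = 1$, whence $\tilde\alpha = \iota^\circ_*(\beta)$ for some $\beta \in \pi_1(F_b)$, with $\iota_*(\beta) = \alpha$ in $\pi_1(X)$. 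Two different lifts of $\alpha$ differ by an element $\gamma \in \ker(i_{X*})$, producing an ambiguity in $\beta$ of the form $\beta \mapsto \beta^\gamma$ via the lifted monodromy $\psi$; this is exactly the relation defining $\pi_1(F_b) \sslash \ker(i_{X*})$.

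The main obstacle is making the meridian-lifting argument precise along the codimension-one part of the discriminant, where the fibration structure genuinely degenerates. Concretely, over a small analytic disk $\Delta \subset Y$ transverse to a divisorial component of $\Sigma$ at a generic point, one must verify that the family $f^{-1}(\Delta) \to \Delta$ extends smoothly modulo codimension-$\ge 2$ loci (absorbed by (C1)), and that a meridian generator in $\pi_1(\Delta^*)$ lifts to a loop in $X^\circ$ whose class in $\pi_1(X)$ actually vanishes. This is the technical heart of Shimada's argument in \cite{shimada}, and amounts to a Lefschetz-type hyperplane section analysis that bootstraps the local picture over a transversal disk into the claimed global exact sequence.
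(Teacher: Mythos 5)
The paper does not actually prove this statement: it is imported verbatim (in a special case) from Shimada, as the attribution ``Cf.\ Corollary 1.1 of \cite{shimada}'' and the surrounding remark make explicit. So there is no in-paper argument to compare against; what you have written is an outline of Shimada's own proof strategy, and at that level it is the right one -- run the Zariski--van Kampen machine on the genuine fibration $f^\circ : X^\circ \to Y^\circ$, then push the conclusion down to $X$ and $Y$ using surjectivity of $\pi_1$ of Zariski-open complements and the codimension hypotheses.

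As a proof, however, the proposal has two genuine gaps. First, the homotopy exact sequence of $f^\circ$ ends in $\pi_2(Y^\circ,b)$, not $\pi_2(Y,b)$, and these can differ: deleting the divisorial part of $\Sigma$ from $Y$ typically creates new elements of $\pi_2$. Condition (Z') only kills $\pi_2(Y)$, so you cannot simply say the $\pi_2$ term is ``controlled''; what must be shown is that the image of $\partial : \pi_2(Y^\circ,b) \to \pi_1(F_b,\tilde b)$ is contained in the normal subgroup $N_{\ker(i_{X*})}$ defining the van Kampen quotient. This containment is a substantive step (it is where (Z') is actually consumed in Shimada's argument) and is absent from your sketch; without it you only get that $\ker(\iota_*)$ \emph{contains} $N_{\ker(i_{X*})} \cdot \langle\!\langle \operatorname{im}\partial\rangle\!\rangle$, not that it equals $N_{\ker(i_{X*})}$. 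Second, the meridian-lifting step is not automatic: a meridian of a divisorial component $\Sigma_i$ lifts to an element of $\ker(i_{X*})$ only if $f^{-1}(\Sigma_i)$ either has codimension $\ge 2$ in $X$ or contains a component along which $f^*\Sigma_i$ has multiplicity one; otherwise the lift is a proper power of a meridian upstairs and need not die in $\pi_1(X)$. This is precisely what (C1) and (C2) are for -- the general fiber over a general point of $\Sigma_i$ is nonempty, irreducible, and contains non-critical points of $f$, so $f$ is a submersion somewhere over $\Sigma_i$ and the meridian lifts on the nose. You correctly identify this as the technical heart but then defer it, so what you have is a faithful roadmap of Shimada's proof rather than a self-contained argument; for the purposes of this paper, citing \cite[Corollary 1.1]{shimada}, as the author does, is the appropriate resolution.
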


\para{A criterion for triviality of the monodromy} In \cite[Proposition 3.36]{shimada}, Shimada gives a criterion under which the lifted monodromy group $\psi(\ker(i_{X *}))$ appearing in \eqref{eq:mainSES} is trivial. In the interest of concision and readability, we give a slightly abbreviated account of this result with some technicalities suppressed; see \cite{shimada} for the full story.

Recall $\Sigma \subset Y$ is a proper Zariski closed subset with complement $Y^\circ = Y \setminus \Sigma$ such that $f^\circ: X^\circ \to Y^\circ$ is a $\cC^\infty$ locally trivial fibration. Enumerate the components of $\Sigma$ of codimension one as $\Sigma_1, \dots, \Sigma_N$. Then the domain of the lifted monodromy group $\ker(i_{X*}: \pi_1(X^\circ) \to \pi_1(X))$ is generated by the conjugacy classes of meridians around each of the components $f^{-1}(\Sigma_i)$.  

Suppose $X$ is the complement of a reduced hypersurface $W$ in a smooth variety $\bar X$, and that $f$ is the restriction to $X$ of a {\em projective} morphism $\bar f: \bar X \to Y$. For $y \in Y$, put $\bar F_y := \bar f^{-1}(y)$, and denote by $W_y$ the {\em scheme-theoretic} intersection $\bar F_y \cap W$. Let $\Sing(\bar f) \subset \bar X$ be the Zariski closed subset of critical points of $\bar f$. 

\begin{proposition}[Cf. Proposition 3.36 of \cite{shimada}]
\label{prop:montriv}
We assume the conditions (C1) and (C2) of \Cref{theorem:shimada}. Suppose that, for a general point $y$ of $\Sigma_i$, the following criteria hold:
\begin{enumerate}
    \item[(C3)] The intersection $\bar F_y \cap \Sing(\bar f)$ is of codimension $\ge 2$ in $\bar F_y$,
    \item[(C4)] $W_y \setminus (W_y \cap \Sing(\bar f))$ is a reduced hypersurface of $\bar F_y \setminus (\bar F_y \cap \Sing(\bar f))$.
\end{enumerate}
Then the local monodromy around $f^{-1}(\Sigma_i)$ is trivial.
\end{proposition}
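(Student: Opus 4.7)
The plan is to localize around a general point $y_0 \in \Sigma_i$ via a small analytic disk $\Delta \subset Y$ transverse to $\Sigma_i$ at $y_0$ with $\Sigma \cap \Delta = \{y_0\}$, and to construct a stratified topological trivialization of the restricted family $\bar f : \bar X_\Delta \to \Delta$ (where $\bar X_\Delta := \bar f^{-1}(\Delta)$) that respects both $W$ and $\Sing(\bar f)$. The meridian around $f^{-1}(\Sigma_i)$ is represented (up to conjugation) by a loop in $\Delta^* := \Delta \setminus \{y_0\}$ encircling $y_0$, lifted into $X^\circ$; triviality of the lifted monodromy $\psi$ on such a loop will follow from the resulting product structure together with a codimension argument using (C1).

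The main technical step is to apply general Whitney stratification theory for projective morphisms to select, after possibly shrinking $\Delta$, a Whitney stratification $\cS$ of $\bar X_\Delta$ such that (i) $W \cap \bar X_\Delta$ and $\Sing(\bar f) \cap \bar X_\Delta$ are unions of strata, and (ii) each stratum maps submersively under $\bar f$ to either $\Delta$ or $\{y_0\}$. Conditions (C3) and (C4) are precisely what guarantee such a stratification exists near $y_0$: (C3) prevents the locus where $\bar f$ fails to be submersive on $\bar F_{y_0}$ from containing codim-$1$ strata, while (C4) forces the scheme $W_{y_0}$ to remain a reduced hypersurface after excising the codim-$\ge 2$ set $\bar F_{y_0} \cap \Sing(\bar f)$, so that the hypersurface strata of $W$ glue into a stratified object of uniform structure across $\Delta$. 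Thom's first isotopy lemma, applicable since $\bar f$ is projective hence proper on $\bar X_\Delta$, then yields a stratified homeomorphism $\bar X_\Delta \cong \bar F_{y_0} \times \Delta$ sending $W \cap \bar X_\Delta$ to $W_{y_0} \times \Delta$ and $\Sing(\bar f) \cap \bar X_\Delta$ to $(\bar F_{y_0} \cap \Sing(\bar f)) \times \Delta$.

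Passing to complements gives the actual trivial bundle $\bar X_\Delta \setminus (W \cup \Sing(\bar f)) \cong (\bar F_{y_0} \setminus (W_{y_0} \cup \Sing(\bar f))) \times \Delta$, on whose fibers any loop in $\Delta^*$ induces trivial monodromy. To transfer this triviality to $\pi_1(F_b) = \pi_1(\bar F_b \setminus W_b)$ as required by the statement, invoke (C1): since $\Sing(\bar f) \cap X = \Sing(f)$ has complex codim $\ge 2$ in $X$, its slice $\Sing(\bar f) \cap (\bar F_b \setminus W_b)$ has codim $\ge 2$ in the complex manifold $\bar F_b \setminus W_b$, so the inclusion $\bar F_b \setminus (W_b \cup \Sing(\bar f)) \hookrightarrow \bar F_b \setminus W_b$ induces an isomorphism on $\pi_1$. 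Combining, the lifted monodromy of the meridian around $f^{-1}(\Sigma_i)$ acts as the identity on $\pi_1(F_b)$.

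The main obstacle is really the Whitney stratification step: one must verify that (C3) and (C4) genuinely suffice to prevent any jump in topological type of the pair $(\bar F_b, W_b)$ away from a codim-$\ge 2$ locus as $b \to y_0$, even though $\bar F_b$ may become singular and $W_b$ may acquire nilpotents in the limit. The delicate calibration of (C4) — requiring reducedness \emph{only} after removing the critical locus of $\bar f$ — is designed to permit precisely the kind of pathology that still admits a stratified trivialization while ruling out the sort of equisingular degeneration that would give rise to honest vanishing cycles and a nontrivial Picard-Lefschetz-type monodromy action on $\pi_1(F_b)$.
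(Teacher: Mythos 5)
A preliminary remark: the paper offers no proof of this proposition --- it is imported (in abbreviated form) from \cite[Proposition 3.36]{shimada} --- so your argument has to stand entirely on its own.

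Your scaffolding (a transversal disk $\Delta$ at a general $y_0 \in \Sigma_i$, an adapted Whitney stratification, Thom's first isotopy lemma, a codimension-two excision at the end) has the right shape, but the pivotal step fails. You stratify so that each stratum of $\bar X_\Delta$ submerses onto either $\Delta$ or $\{y_0\}$, and then assert that Thom's first isotopy lemma yields a stratified homeomorphism $\bar X_\Delta \cong \bar F_{y_0} \times \Delta$ carrying $W$ to $W_{y_0}\times\Delta$. The isotopy lemma only gives local triviality over the strata of the base: with $\Delta$ stratified as $\{y_0\}\sqcup\Delta^*$ you get a product over $\Delta^*$ (which, $\Delta^*$ being homotopy equivalent to a circle, says nothing about triviality of the monodromy) and nothing across $y_0$. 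A genuine product over all of $\Delta$ modeled on the central fiber would force $(\bar F_b, W_b)$ to be homeomorphic to $(\bar F_{y_0}, W_{y_0})$, and (C3)--(C4) do not guarantee this: reducedness of $W_{y_0}$ does not prevent $W_{y_0}$ from acquiring singularities absent from the nearby $W_b$. This is not a corner case but the main case the proposition must handle --- e.g.\ in the applications in \Cref{lemma:SESlocal} and \Cref{prop:LredSES}, the generic point of a codimension-one stratum of the discriminant is exactly where the reduced divisor $W_y$ picks up an extra node or tangency --- and there the central pair is simply not homeomorphic to the nearby one, so the homeomorphism you invoke cannot exist even though the conclusion of the proposition still holds.

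The missing idea is that the trivialization must be constructed only on the complement of the closed union $Z$ of strata that fail to submerse onto $\Delta$; conditions (C3) and (C4) are calibrated precisely so that $Z\cap\bar F_{y_0}$ has codimension $\ge 2$ in $\bar F_{y_0}$ ((C3) handles $\Sing(\bar f)$, while a reduced hypersurface $W_{y_0}$ is smooth, with $\bar f|_W$ submersive, off a codimension-$\ge 2$ subset of $\bar F_{y_0}$). One then needs (i) a trivialization of the pair $(\bar X_\Delta\setminus Z,\ W\setminus Z)$ over all of $\Delta$ --- nontrivial, because $\bar f$ is no longer proper after removing $Z$, which is exactly where the controlled-vector-field arguments in \cite{shimada} do real work --- and (ii) the fact that $F_b\setminus Z\hookrightarrow F_b$ is a $\pi_1$-isomorphism. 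On the latter point your appeal to (C1) is also off: codimension $\ge 2$ of $\Sing(f)$ in the total space $X$ does not imply codimension $\ge 2$ of its slice in a given fiber (a codimension-two subvariety can contain a whole fiber); what actually saves that step is that the reference fiber lies over $Y^\circ$, where $f$ restricts to a locally trivial bundle on the smooth variety $X^\circ$ and hence has no critical points at all.
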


\section{The global fundamental group}\label{S:global}

The goal of this section is to prove \Cref{cor:globalSES}, which describes $\pi_1^{orb}(\P\cH_\kappa)$ as an extension of a mapping class group. This will be obtained from \Cref{theorem:shimada} as follows. Let 
\[
\cM_g^{nh} := \cM_g \setminus \mathcal{HE}_g 
\]
be the complement of the hyperelliptic locus, and let $Y =\cM_g^{nh\,\prime}$ be a finite cover of $\cM_g^{nh}$ that is a quasi-projective variety (i.e. where the orbifold locus in $\cM_g^{nh}$ has been resolved). Let $X = \P\cH_\kappa^{nh\,\prime}$ be the pullback of $\P\cH_\kappa$ to $\cM_g^{nh\,\prime}$, and take $f: X \to Y$ the natural projection. To verify that the conditions (C1) and (C2) hold, we will require some preliminary results, established in \Cref{SS:globalprelim}.

\subsection{Some preliminary results}\label{SS:globalprelim}
Conditions (C1) and (C2) require an understanding of the irreducibility of the general fiber, as well as information about the singular set of the morphism. We establish some results in that direction here.

\para{An irreducibility criterion}
\begin{lemma}\label{lemma:irred}
    Let $C \in \cM_g$ and $L \in \Pic^d(C)$ be given for some $d \ge g$. Let $\mu = \{k_1, \dots, k_p\}$ be a partition of $d$ with $k_i \ge 2$ for $1 \le i \le n$ and $k_i = 1$ for $n+1 \le i \le p$. Suppose that the quantity
    \[
    h^0(L(-k_1 P_1 - \dots - k_n P_n))
    \]
    is independent of $n$-tuple $(P_1,\dots,P_n) \in C^n$. Then the set
    \[
    \abs{L}_\mu = \{D \in \abs{L} \mid \type(D) = \mu \}\subset \abs{L}
    \]
    is nonempty and irreducible.
\end{lemma}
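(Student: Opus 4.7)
The plan is to realize $\abs{L}_\mu$ as the image of an open subset of an irreducible projective bundle over $C^n$, with irreducibility of $\abs{L}_\mu$ then flowing from that of the bundle.

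First I would construct the relevant projective bundle. Let $\pi: C^n \times C \to C^n$ and $p_C: C^n \times C \to C$ denote the two projections, and for $1 \le i \le n$ let $\Delta_i \subset C^n \times C$ denote the graph of the $i$-th coordinate projection $C^n \to C$. Form the relative line bundle
\[
\cL := p_C^* L \otimes \cO_{C^n \times C}\Bigl( -\sum_{i=1}^n k_i \Delta_i \Bigr),
\]
whose restriction to the fiber $\{(P_1,\dots,P_n)\} \times C$ is $L(-k_1 P_1 - \dots - k_n P_n)$. The hypothesis is precisely that $h^0$ of these fiberwise restrictions is independent of $(P_1, \dots, P_n) \in C^n$, so by Grauert's theorem (cohomology and base change over the reduced, irreducible base $C^n$), the direct image $\pi_* \cL$ is locally free of rank $r := h^0(L(-\sum k_i P_i))$. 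Its projectivization
\[
Z := \bP(\pi_* \cL) \longrightarrow C^n
\]
is a projective bundle with fiber $\abs{L(-\sum k_i P_i)} \cong \bP^{r-1}$. Since $C^n$ and the fibers are irreducible, so is $Z$.

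Next I would identify $\abs{L}_\mu$ inside this picture. Let $Z^\circ \subset Z$ be the open locus where (i) the $P_i$ are pairwise distinct, and (ii) the zero divisor $(s) \subset C$, of degree $d - d(\mu) = p - n$, is reduced and supported away from $\{P_1, \dots, P_n\}$. Being Zariski-open in the irreducible $Z$, $Z^\circ$ is itself irreducible once known to be nonempty. The natural map
\[
\Phi: Z^\circ \longrightarrow \abs{L}, \qquad ((P_1,\dots,P_n),[s]) \longmapsto \sum_{i=1}^n k_i P_i + (s),
\]
has image exactly $\abs{L}_\mu$: the image divisor has the prescribed multiplicities $k_i$ at distinct points $P_i$ and simple zeros at the reduced divisor $(s)$, while conversely every $D \in \abs{L}_\mu$ lifts to $Z^\circ$ via its canonical decomposition $D = D^{nr} + (D - D^{nr})$. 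Irreducibility of $\abs{L}_\mu$ thus follows immediately from irreducibility of $Z^\circ$.

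The remaining task is to verify that $Z^\circ$ is nonempty. This requires first $r \ge 1$, which follows from $d \ge g$ via Riemann-Roch applied to $L$, and then the existence of a tuple $(P_i)$ with distinct coordinates and a section $s \in H^0(L(-\sum k_i P_i))$ whose zero divisor is reduced and disjoint from $\{P_1,\dots,P_n\}$. The two obstructions inside each fiber $\bP^{r-1}$—the section's zero divisor becoming non-reduced, or picking up an extra vanishing at some $P_j$—are each cut out by proper closed conditions. A Bertini-type argument handles the reducedness, while the rigidity supplied by the constancy of $\pi_*\cL$ of rank $r$ (which would be violated if every section universally acquired an extra zero at one of the $P_j$) rules out the second pathology. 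Combining these shows a generic point of $Z$ above a generic tuple lies in $Z^\circ$.

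I anticipate this nonemptiness verification to be the main obstacle: it is where one must convert the rigidity coming from the constancy hypothesis on $h^0$ into the quantitative statement that generic sections behave as expected. The irreducibility itself, which is the heart of the lemma, comes cheaply from the projective bundle construction once Grauert's theorem is applied.
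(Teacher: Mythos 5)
Your proposal is essentially the paper's proof: the paper defines the incidence correspondence $X(L) = \{((P_1,\dots,P_n),D) \mid D \ge \sum k_i P_i\} \subset C^n \times \abs{L}$, observes that the constancy hypothesis makes it a bundle over $C^n$ with equidimensional projective-space fibers (your $\bP(\pi_*\cL)$ via Grauert is just a more formal packaging of this), and concludes that its image in $\abs{L}$, of which $\abs{L}_\mu$ is a Zariski-open subset, is irreducible. The nonemptiness verification you flag as the main obstacle is in fact passed over silently in the paper, so your treatment is if anything more careful on that point.
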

\begin{proof}
    Define the incidence correspondence
    \[
    X(L) = \{((P_1, \dots, P_n),D) \mid D \ge \sum_{i = 1}^n k_i P_i\} \subset C^n \times \abs{L};
    \]
    let $p_1: X(L) \to C^n$ and $p_2: X(L) \to \abs{L}$ denote the projections onto the respective factors. The fiber $p_1^{-1}(P_1,\dots,P_n)$ is identified with the linear system $\abs{L(-k_1P_1 - \dots - k_n P_n)}$. By hypothesis, $X(L)$ is then the total space of a bundle over $C^n$ with fibers equidimensional projective spaces; in particular $X(L)$ is irreducible. Thus its image $p_2(X(L)) \subset \abs{L}$ is also irreducible. As $\abs{L}_\mu$ is a Zariski open subset of this, it too is irreducible. 
\end{proof}

\para{Genericity} We will see in \Cref{lemma:hypsat} below that the singularities of $f: \P\cH_\kappa \to \cM_g$ are governed by curves possessing certain exceptional special divisors. Here, we study the dimension of the locus of curves in $\cM_g$ with divisors of this form.

 Define $\Xi_\kappa \subset \cM_g$ as the locus of curves $C$ such that there exists a meromorphic function $\phi$ with polar divisor $(\phi)_\infty$ of type $\kappa^{nr}$. Abusing notation, we will continue to let $\Xi_\kappa$ denote the locus of such curves even in covers and subspaces of $\cM_g$, e.g. $\cM_g^{nh\,\prime}$; the ambient space will be indicated in context. In anticipation of the following lemma, for a partition $\mu = \{k_1, \dots, k_p\}$, define $\Hur_g(\mu)$ as the Hurwitz space of branched covers $\phi: C \to \P^1$ of degree $\deg(\mu)$ with $(\phi)_\infty$ of type $\mu$, where $C$ is a compact Riemann surface of genus $g$.

A standard application of the Riemann-Hurwitz formula yields the following.
\begin{lemma}
    \label{lemma:hurdim}
    The dimension of $\Hur_g(\mu)$ is computed as
    \[
    \dim(\Hur_g(\mu)) = 2g + 2 \deg(\mu) - c(\mu) -2.
    \]
\end{lemma}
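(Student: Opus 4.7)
The plan is straightforward: apply Riemann-Hurwitz to count simple branch points away from $\infty$ for a generic element of $\Hur_g(\mu)$, and then identify $\dim(\Hur_g(\mu))$ with this count via the classical description of the Hurwitz space as a finite \'etale cover of a configuration space of simple branch points on $\P^1 \setminus \{\infty\}$.

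First I would apply Riemann-Hurwitz to a cover $\phi: C \to \P^1$ of degree $\deg(\mu)$, giving a total ramification weight of $2g - 2 + 2\deg(\mu)$. The contribution of the ramification profile over $\infty$ is $\sum_{i=1}^p (k_i - 1) = c(\mu)$, where the entries $k_i = 1$ for $i > n$ contribute zero (consistent with the definition of $c(\mu)$). Subtracting, the total ramification weight away from $\infty$ is $2g - 2 + 2\deg(\mu) - c(\mu)$. For a generic cover in $\Hur_g(\mu)$, this weight is realized by distinct simple branch points on $\P^1 \setminus \{\infty\}$, each contributing $1$, so the number of simple branch points is exactly $N = 2g + 2\deg(\mu) - c(\mu) - 2$.

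To conclude, I would invoke the Clebsch-Hurwitz description of $\Hur_g(\mu)$: the branch-point map
\[
\Hur_g(\mu) \to \Conf_N(\P^1 \setminus \{\infty\})
\]
sending a cover to its collection of simple branch points is a finite \'etale map onto its image. Since the target has dimension $N$, the stated formula for $\dim(\Hur_g(\mu))$ follows.

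There is no serious obstacle here; the only thing needing comment is the genericity assertion that ramification away from $\infty$ is generically simple with distinct branch values, which is immediate from the Clebsch-Hurwitz presentation of $\Hur_g(\mu)$ via tuples of permutations in $S_{\deg(\mu)}$ (one simple transposition per simple branch point, together with a permutation over $\infty$ of cycle type $\mu$, multiplying to the identity).
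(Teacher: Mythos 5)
Your proof is correct and is exactly the ``standard application of the Riemann--Hurwitz formula'' that the paper invokes without writing out: count the $2g-2+2\deg(\mu)-c(\mu)$ simple branch points away from $\infty$ and use the finite branch-point map to the configuration space. Nothing further is needed.
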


\begin{lemma}\label{lemma:xicodim}
    For $g \ge d(\kappa) + n + 1$, the locus $\Xi_\kappa \subset \cM_g$ has codimension $\ge 2$.
\end{lemma}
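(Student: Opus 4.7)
The plan is to realize $\Xi_\kappa$ as the image of the Hurwitz space $\Hur_g(\kappa^{nr})$ under the forgetful morphism $\Phi \colon (C,\phi) \mapsto C$ into $\cM_g$, and then to bound dimensions using the formula from \Cref{lemma:hurdim}. Since the formula for $\dim \Hur_g(\mu)$ tracks the full pair $(C,\phi)$, we will need to discount by the (positive) dimension of the fiber of $\Phi$ over a generic curve.

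By the definition of $\Xi_\kappa$, the image of $\Phi$ is exactly $\Xi_\kappa$. The key fiber estimate comes from the natural action of $\Aff(1) = \C^* \ltimes \C$ on $\Hur_g(\kappa^{nr})$ by $(a,b) \cdot (C, \phi) = (C, a\phi+b)$: since affine transformations of $\P^1$ fix $\infty$, this action preserves the polar divisor and hence preserves every fiber of $\Phi$. Moreover the action is free on each fiber, since $a\phi + b = \phi$ forces $(a-1)\phi = -b$, and nonconstancy of $\phi$ forces $(a,b)=(1,0)$. Consequently every fiber of $\Phi$ has dimension $\ge 2$, which yields
\[
\dim \Xi_\kappa \;\le\; \dim \Hur_g(\kappa^{nr}) - 2.
\]

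To finish, use $\deg(\kappa^{nr}) = d(\kappa)$ and $c(\kappa^{nr}) = d(\kappa) - n$, so that \Cref{lemma:hurdim} gives $\dim \Hur_g(\kappa^{nr}) = 2g + d(\kappa) + n - 2$. Combined with $\dim \cM_g = 3g-3$, this gives
\[
\codim_{\cM_g}(\Xi_\kappa) \;\ge\; (3g-3) - \bigl(2g + d(\kappa) + n - 4\bigr) \;=\; g - d(\kappa) - n + 1,
\]
which is $\ge 2$ exactly when $g \ge d(\kappa) + n + 1$, matching the hypothesis.

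There is no substantial obstacle here: the proof is essentially a dimension count once one identifies $\Aff(1)$ as the source of the two extra parameters in the Hurwitz space compared to $\Xi_\kappa$. The only point that requires any care is confirming that no component of the fiber can have dimension less than $2$, which is immediate from freeness of the affine action on nonconstant functions.
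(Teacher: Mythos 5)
Your proof is correct and is essentially the paper's argument: both bound $\dim \Xi_\kappa$ by $\dim \Hur_g(\kappa^{nr}) - 2$, where the two extra dimensions come from post-composition by $\Aut(\C) \cong \C^* \ltimes \C$ (which fixes $\infty$ and hence the polar divisor), and then conclude by the same dimension count. Your added remark on freeness of the affine action just makes explicit why the fibers are at least two-dimensional.
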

\begin{proof}
    A curve $C \in \cM_g$ belongs to $\Xi_{\kappa}$ if and only if there is some $\phi: C \to \P^1$ with $\phi \in \Hur_g(\kappa^{nr})$. Any such $\phi$ in fact belongs to a two-dimensional family in $\Hur_g(\kappa^{nr})$, since post-composing by an element of $\Aut(\C)$ gives another member. Thus by \Cref{lemma:hurdim},
    \[
    \dim(\Xi_\kappa) \le 2g+ 2 \deg(\kappa^{nr}) - c(\kappa) - 4 =  2g+d(\kappa)+n - 4.
    \]
    Thus $\Xi_\kappa$ will have codimension $\ge 2$ in $\cM_g$ so long as its dimension is at most $3g-5$, i.e. in the claimed range $g \ge d(\kappa) + n + 1$.
\end{proof}

\subsection{The global short exact sequence}
Here we prove the main result of this section, \Cref{cor:globalSES}.

\begin{lemma}\label{lemma:hypsat}
   For $g \ge \max\{4,d(\kappa) + n+1\}$, the projection $f: \P\cH_\kappa^{nh\,\prime} \to \cM_g^{nh\,\prime}$ satisfies the hypotheses of \Cref{theorem:shimada}. Specifically, the following assertions hold:
   \begin{enumerate}
       \item $\P\cH_\kappa^{nh\,\prime}$ and $\cM_g^{nh\,\prime}$ are smooth varieties,
       \item For $C \in \cM_g^{nh\,\prime} \setminus \Xi_\kappa$, the fiber $F_C:= f^{-1}(C)$ is nonempty and irreducible,
       \item $\Sing(f) \subset f^{-1}(\Xi_\kappa)$, and in particular has codimension $\ge 2$,
       \item $\pi_2(\cM_g^{nh\,\prime}) \cong \pi_2(\cM_g) = 0$.
   \end{enumerate}
\end{lemma}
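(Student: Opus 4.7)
I will dispatch the four parts in order, reserving the main work for (2) and (3).

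For (1): the base $\cM_g^{nh\,\prime}$ is smooth by choice of cover, and $\P\cH_\kappa$ is a smooth orbifold via the standard period-coordinate charts that exhibit $\cH_\kappa$ locally as an open subset of an affine space. Since $\Aut(C,\omega)\subset \Aut(C)$, every automorphism of a pointed differential is an automorphism of the underlying curve, so resolving the orbifold structure on $\cM_g^{nh}$ resolves it on $\P\cH_\kappa^{nh}$ as well, making the pullback $\P\cH_\kappa^{nh\,\prime}$ a smooth variety. For (4): Teichm\"uller space $\cT_g$ is contractible, so $\cM_g$ is an orbifold $K(\Mod_g,1)$ and $\pi_2(\cM_g)=0$. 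When $g \ge 4$ the hyperelliptic locus has complex codimension $\ge 2$, so removing it and passing to the finite cover preserves $\pi_2$.

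For (2), the plan is to apply \Cref{lemma:irred} to $L = K_C$ and $\mu = \kappa$, with $F_C = \abs{K_C}_\kappa$. Writing $D = \sum_{i=1}^n k_i P_i$, Serre duality yields
\[
h^0(K_C(-D)) = h^0(D) + g - 1 - d(\kappa),
\]
so the constancy hypothesis of \Cref{lemma:irred} reduces to constancy of $h^0(D)$ on $C^n$. In our range $g > d(\kappa)$, so for generic $(P_i)$ one has $h^0(D) = 1$ by Riemann--Roch; any jump $h^0(D) \ge 2$ exhibits a nonconstant meromorphic function on $C$ with polar divisor supported in $\{P_i\}$ with multiplicities at most $\{k_i\}$. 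For $C \notin \Xi_\kappa$ the polar type exactly $\kappa^{nr}$ is precluded, and the remaining possibilities correspond to strictly smaller pole partitions $\mu' < \kappa^{nr}$, defining analogous Hurwitz-type loci $\Xi_{\mu'}\subset \cM_g$ which are also of codimension $\ge 2$ by the same count as \Cref{lemma:xicodim}. Off the union of these codim-$\ge 2$ loci, $h^0(D) \equiv 1$ identically, and \Cref{lemma:irred} delivers nonemptiness and irreducibility of $\abs{K_C}_\kappa$.

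For (3): since $X$ and $Y$ are smooth, $\Sing(f)$ is exactly the locus where the scheme-theoretic fiber $f^{-1}(f(x))$ fails to be smooth of the expected dimension $g-1-c(\kappa)$ at $x$. The analysis in (2) shows that this cannot happen over a curve $C$ outside the codim-$\ge 2$ ``bad'' base locus, so $\Sing(f) \subset f^{-1}(\Xi_\kappa)$ (interpreted to include the auxiliary $\Xi_{\mu'}$, still of codimension $\ge 2$). To upgrade this to $\codim_X \Sing(f) \ge 2$, I will bound the amount by which $h^0(K_C - D)$, and hence the fiber dimension of $f$, can rise at a generic point of $\Xi_\kappa$, using the Hurwitz-space structure underlying \Cref{lemma:hurdim}, and then combine with the codimension bound of \Cref{lemma:xicodim}.

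The main obstacle lies in (3): the passage from codimension in $Y$ to codimension in $X$ is not automatic, since fibers of $f$ may jump in dimension over $\Xi_\kappa$ (and over the auxiliary $\Xi_{\mu'}$). Controlling the magnitude of this jump in terms of the Hurwitz dimension count is where the precise numerical hypothesis $g \ge d(\kappa)+n+1$ is used with no slack, and where the careful bookkeeping of sub-partitions $\mu' < \kappa^{nr}$ becomes essential.
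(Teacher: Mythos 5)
Parts (1), (2) and (4) are essentially fine. In (2) you are in fact more careful than the paper: you observe that $C\notin\Xi_\kappa$ only precludes functions with polar divisor of type exactly $\kappa^{nr}$, whereas a jump $h^0(k_1P_1+\dots+k_nP_n)\ge 2$ could be caused by a function whose polar divisor has a strictly smaller type $\mu'$. Enlarging the bad locus by the loci $\Xi_{\mu'}$ is a legitimate fix, and it costs nothing: since $\deg(\mu')$ and the number of parts of $\mu'$ are bounded by $d(\kappa)$ and $n$ respectively, \Cref{lemma:hurdim} gives the same codimension bound as in \Cref{lemma:xicodim}, and condition (C2) of \Cref{theorem:shimada} only asks for \emph{some} codimension-$\ge 2$ bad locus. (The paper simply builds the constancy of $h^0$ into its reading of the definition of $\Xi_\kappa$.)

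The problem is (3), in two places. First, you assert that ``the analysis in (2) shows'' that the scheme-theoretic fiber is smooth of the expected dimension over the good locus. It does not: \Cref{lemma:irred} gives only that the \emph{set-theoretic} fiber is nonempty and irreducible of the expected dimension, which is strictly weaker than surjectivity of $df$ (for $f(x,y)=x^2$ every set-theoretic fiber is a smooth line of the right dimension, yet the fiber over $0$ consists entirely of critical points). The missing idea, which is the heart of the paper's proof, is to globalize the incidence correspondence: the variety $\cX_\kappa\subset\P\Omega_{g,\vec n}^{\circ\prime}$ of triples $(C,(P_i),D)$ with $D\ge\sum k_iP_i$ is, by the constancy of $h^0$, a projective-space bundle over $\cM_{g,\vec n}^{\circ\prime}$, so the composite $\cX_\kappa\to\cM_{g,\vec n}^{\circ\prime}\to\cM_g^{\circ\prime}$ is a composition of submersions; commutativity of the resulting square then forces $\bar f$, hence $f$, to be a submersion over $\cM_g^{\circ\prime}$, which yields $\Sing(f)\subset f^{-1}(\Xi_\kappa)$ directly with no appeal to fiberwise smoothness. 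Second, your upgrade from $\codim_Y\Xi_\kappa\ge 2$ to $\codim_X\Sing(f)\ge 2$ is announced as a plan (``I will bound the amount by which $h^0(K_C-D)$ can rise'') but never executed, and you yourself flag it as the main obstacle. You are right that the implication is not formal, since fiber dimensions can jump over $\Xi_\kappa$ and the naive count $\dim\Xi_\kappa+\dim(\text{generic fiber})$ already saturates the bound $\dim X-2$ exactly when $g=d(\kappa)+n+1$; a complete argument would have to stratify $\Xi_\kappa$ (and the auxiliary $\Xi_{\mu'}$) by the value of $h^0$ and verify via \Cref{lemma:hurdim} that each unit of fiber-dimension jump is compensated by at least one unit of extra codimension in the base. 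As written, assertion (3) is not proved.
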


\begin{proof}
Assertion (1) is well-known. To prove assertion (4), recall that $\pi_2(\cM_g) = 0$, and hence the same holds for any unramified cover of $\cM_g$. For $g \ge 4$, the hyperelliptic locus $\mathcal{HE}_g$ is of (complex) codimension $(3g-3)-(2g-5) = g-2 \ge 2$ in $\cM_g$ and hence by transversality, $\pi_2(\cM_g \setminus \mathcal{HE}_g) \cong \pi_2(\cM_g)$.

Assertion (2) follows from the irreducibility criterion of \Cref{lemma:irred}. In order to apply this to $L = K_C$, we must show that $h^0(K_C(-k_1 P_1 - \dots - k_n P_n))$ is constant. By Riemann-Roch, we can instead consider $h^0(k_1 P_1 + \dots + k_n P_n)$. By the definition of $\Xi_\kappa$, since $C \not\in \Xi_\kappa$, this quantity is constant, equal to $1$. 

To prove (3), we globalize the incidence correspondence techniques of \Cref{lemma:irred}. Let $\cM_{g,\vec n}$ denote the moduli space of Riemann surfaces of genus $g$ with $n$ {\em ordered} marked points, let $\cM_{g, \vec n}^\circ$ denote the complement of the hyperelliptic locus and the preimage of $\Xi_\kappa$ in $\cM_{g,\vec n}$, and finally define $\cM_{g, \vec n}^{\circ\prime}$ as a finite unramified cover of $\cM_{g, \vec n}^{\circ}$ with empty orbifold locus. Extend this notation scheme, defining $\cM_g^{\circ \prime}$ as the complement of $\Xi_\kappa$ in $\cM_g^{nh\,\prime}$. Let $\P\Omega_{g,\vec n}^{\circ\prime}$ denote the projectivized Hodge bundle over $\cM_{g,\vec n}^{\circ\prime}$.

Define
\[
\cX_\kappa = \{(C,(P_1,\dots, P_n),D) \mid (C, (P_1, \dots, P_n)) \in \cM_{g,\vec n}^{\circ\prime}, D \in \abs{K_C}, D \ge \sum_{i=1}^n k_i P_i\} \subset \P\Omega_{g,\vec n}^{\circ\prime}.
\]

By the argument of \Cref{lemma:irred}, the projection $p: \P\Omega_{g,\vec n}^{\circ \prime} \to \cM_{g,\vec n}^{\circ \prime}$ restricts to realize $\cX_\kappa$ as the total space of a locally trivial fiber bundle with projective space fibers. In particular, as a map of smooth manifolds, $p$ is a smooth submersion. We will show that $f: \P\cH_\kappa ' \to \cM_g^{nh\,\prime}$ is a submersion over any point in $\cM_g^{nh\,\prime}\setminus \Xi_\kappa$. This will show that $f(\Sing(f)) \subset \Xi_\kappa \subset \cM_g^{nh\,\prime}$. As $\Xi_\kappa \subset \cM_g^{nh\,\prime}$ is a locus of codimension $\ge 2$, this will prove assertion (3). To do so, consider the following commutative diagram:
\[
\xymatrix{
\cX_\kappa \ar[r] \ar_{p}[d] & \overline{\P\cH_\kappa^{\circ \prime}} \subset \P\Omega_g^{\circ \prime} \ar[d]^{\bar f}\\
\cM_{g,\vec n}^{\circ \prime} \ar[r]        & \cM_g^{\circ\prime}.
}
\]
In the upper-right corner, $\P\Omega_g^{\circ \prime}$ denotes the projectivized Hodge bundle over $\cM_g^{\circ \prime}$, and $\overline{\P\cH_\kappa^{\circ \prime}}$ denotes the closure of the projectivized $\kappa$ stratum inside this. Both the left and bottom arrows are projection maps for fiber bundles, and hence are submersions. By commutativity, it follows that $\bar f$ is likewise a submersion, proving the claim. 
\end{proof}

For a Riemann surface $C$, recall that $\abs{K_C}_\kappa \subset \abs{K_C}$ denotes the canonical divisors in the stratum $\P\cH_\kappa$.

\begin{corollary}\label{cor:globalSES}
For $g \ge \max\{4,d(\kappa) + n+1\}$, there is a short exact sequence
\begin{equation}\label{eq:firstSES}
    1 \to \pi_1(\abs{K_C}_\kappa)\sslash K \to \pi_1^{orb}(\P\cH_\kappa) \to \Mod_g \to 1,
\end{equation}
where $K$ is a certain subgroup of $\Aut(\pi_1(\abs{K_C}_\kappa))$.
\end{corollary}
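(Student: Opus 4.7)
The plan is to apply Shimada's \Cref{theorem:shimada} to the map $f: X \to Y$ with $X = \P\cH_\kappa^{nh\,\prime}$ and $Y = \cM_g^{nh\,\prime}$ as set up, and then upgrade the resulting SES of ordinary fundamental groups to one of orbifold fundamental groups of $\P\cH_\kappa$ and $\cM_g$ themselves. The four parts of \Cref{lemma:hypsat} deliver precisely the hypotheses needed: assertion (1) supplies the ambient smoothness, assertion (2) combined with \Cref{lemma:xicodim} yields the irreducibility-off-codimension-two condition (C2), assertion (3) yields the singular locus condition (C1), and assertion (4) yields (Z'). Invoking \Cref{theorem:shimada} with a basepoint $C \in Y \setminus \Xi_\kappa$ produces
\[
1 \to \pi_1(F_C) \sslash \ker(i_{X*}) \to \pi_1(X) \to \pi_1(Y) \to 1,
\]
where the fiber $F_C = f^{-1}(C)$ is identified with $\abs{K_C}_\kappa$.

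To promote this to the orbifold SES in the statement, I would first observe that $\pi_1^{orb}(\cM_g^{nh}) = \Mod_g$: the hyperelliptic locus has complex codimension $g - 2 \ge 2$ in $\cM_g$ for $g \ge 4$, so removing it does not affect the orbifold fundamental group by transversality. By \Cref{lemma:hypsat}(3), $f$ is a smooth submersion outside the codimension $\ge 2$ locus $f^{-1}(\Xi_\kappa)$, and the same codimension bound on $\mathcal{HE}_g$ shows that $f^{-1}(\mathcal{HE}_g) \subset \P\cH_\kappa$ has codimension $\ge 2$ as well; hence $\pi_1^{orb}(\P\cH_\kappa^{nh}) = \pi_1^{orb}(\P\cH_\kappa)$. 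By construction, $\pi_1(X) \subset \pi_1^{orb}(\P\cH_\kappa)$ and $\pi_1(Y) \subset \Mod_g$ sit as finite-index subgroups, and the cover $X \to \P\cH_\kappa^{nh}$ is pulled back from $Y \to \cM_g^{nh}$.

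With this compatibility of the two covers in hand, I would then extend the cover-level SES. Surjectivity of $\pi_1^{orb}(\P\cH_\kappa) \to \Mod_g$ follows from surjectivity of $\pi_1(X) \to \pi_1(Y)$ together with the compatibility of covers and the fact that $\Mod_g$ is generated by $\pi_1(Y)$ together with the image of $\pi_1^{orb}(\P\cH_\kappa)$ under $f_*$. The kernel is again a Zariski--van Kampen quotient $\pi_1(\abs{K_C}_\kappa) \sslash K$, with $K$ containing $\ker(i_{X*})$ but enlarged to incorporate the monodromy contributions from the deck group of the finite cover. The main obstacle in the argument is precisely this last bookkeeping step of extending the SES cleanly across the two finite covers and identifying the enlarged subgroup $K \subset \Aut(\pi_1(\abs{K_C}_\kappa))$; the geometric content, which is the nontrivial input, has already been isolated in \Cref{lemma:hypsat}.
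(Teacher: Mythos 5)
Your setup is the same as the paper's: apply \Cref{theorem:shimada} to $f: \P\cH_\kappa^{nh\,\prime} \to \cM_g^{nh\,\prime}$ using \Cref{lemma:hypsat}, then pass from the finite covers back to $\pi_1^{orb}(\P\cH_\kappa)$ and $\Mod_g$. But there is a genuine gap at the surjectivity step. Shimada's theorem only gives you surjectivity of $\pi_1(X) \to \pi_1(Y)$, where $\pi_1(Y) = \pi_1(\cM_g^{nh\,\prime})$ is a \emph{proper} finite-index subgroup of $\Mod_g$ in general (the cover $\cM_g'$ was introduced precisely to resolve the orbifold locus, so it is nontrivial). To conclude that $\pi_1^{orb}(\P\cH_\kappa) \to \Mod_g$ is surjective you must produce elements of $\pi_1^{orb}(\P\cH_\kappa)$ hitting the nontrivial cosets of $\pi_1(Y)$, and your justification --- ``the fact that $\Mod_g$ is generated by $\pi_1(Y)$ together with the image of $\pi_1^{orb}(\P\cH_\kappa)$ under $f_*$'' --- is exactly the statement to be proved, not an available fact; as written the argument is circular. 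The paper closes this gap by citing a substantive external input, \cite[Theorem B]{strata2}, which says that the monodromy of the stratum surjects onto $\Mod_g$ (a nontrivial generation result about Dehn twists in the image of the monodromy). Some such input, or a genuine argument that a dominant morphism of irreducible DM stacks with irreducible generic fiber induces a surjection on orbifold fundamental groups, is required here and is missing from your proposal.

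A secondary point: the ``bookkeeping'' you identify as the main obstacle is actually the easy part, and your description of it is slightly off. Because $X = \P\cH_\kappa^{nh\,\prime}$ is the pullback of the cover $\cM_g^{nh\,\prime} \to \cM_g^{nh}$, one has $\pi_1(X) = f_*^{-1}(\pi_1(Y))$ inside $\pi_1^{orb}(\P\cH_\kappa)$ (up to the codimension-two removals), so the kernel of $\pi_1^{orb}(\P\cH_\kappa) \to \Mod_g$ coincides with the kernel of $\pi_1(X) \to \pi_1(Y)$; no enlargement of $K$ by deck-group contributions is needed. Since the statement leaves $K$ unspecified this does not affect correctness, but it misidentifies where the real content of the corollary lies.
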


\begin{proof}
    By \Cref{lemma:hypsat}, the projection $f: \P\cH_\kappa^{nh\,\prime} \to \cM_g^{nh\,\prime}$ satisfies the hypotheses of \Cref{theorem:shimada}, so that there is a sequence
    \[
    1 \to \pi_1(\abs{K_C}_\kappa)\sslash K \to \pi_1(\P\cH_\kappa^{nh\,\prime}) \to \pi_1(\cM_g^{nh\,\prime}) \to 1,
    \]
    for some subgroup $K \le \Aut(\pi_1(\abs{K_C}_\kappa))$. As $g \ge 4$, the hyperelliptic locus (of dimension $2g-1$) is of codimension at least $2$, so that $\pi_1(\cM_g^{nh,\prime}) \cong \pi_1(\cM_g')$ is a finite-index subgroup of $\Mod_g$ (here $\cM_g'$ denotes the corresponding cover of $\cM_g$ with the hyperelliptic locus filled back in). Analogously, $\pi_1(\P\cH_\kappa^{nh\,\prime}) \cong \pi_1(\P\cH_\kappa')$ is isomorphic to a finite-index subgroup of $\pi_1^{orb}(\P\cH_\kappa)$. By \cite[Theorem B]{strata2}\footnote{The statement of \cite[Theorem B]{strata2} requires $g \ge 5$, but for strata for which $\gcd\{k_1, \dots, k_{p}\} = 1$ (such as we consider here), it is easy to extend the result to the regime $g \ge 3$, since the target of the monodromy map here is the ordinary mapping class group, for which Dehn twist generating sets are of course well-known. In any event, other parts of the proof of \Cref{theorem:main} require $g \ge 7$, so this is a moot point.}, the map $\pi_1(\P\cH_\kappa^{nh\,\prime}) \to \pi_1(\cM_g^{nh\,\prime})$ extends to a surjection 
    \[
    \pi_1^{orb}(\P\cH_\kappa) \onto \Mod_g.
    \]
    Thus \eqref{eq:firstSES} extends to the claimed short exact sequence
    \[
    1 \to \pi_1(\abs{K_C}_\kappa)\sslash K \to \pi_1^{orb}(\P\cH_\kappa) \to \Mod_g \to 1.
    \]
\end{proof}

\section{Linear systems and simple braid groups}\label{S:SESs}

In this section and the next, we take up a study of the structure of the fiber $\abs{K_C}_\kappa$ of the map $f: \P\cH_\kappa \to \cM_g$. To understand its fundamental group, it will be necessary to understand analogous spaces of divisors $\abs{L}_\kappa$ for general line bundles of degree $2g-2$. In this section, we establish two main structural results about such $\pi_1(\abs{L}_\kappa)$, \Cref{lemma:SESlocal,prop:LredSES}; both concern the relationship between $\pi_1(\abs{L}_\kappa)$ and certain subgroups of surface braid groups. Then in \Cref{section:toKC}, we show how to specialize these results, passing from a general line bundle of degree $2g-2$ to the canonical bundle.

\subsection{The Abel-Jacobi map}\label{ss:workingenv} 

We begin with a recollection of the Abel-Jacobi map. To avoid giving the impression that the results here are specific to degree $2g-2$, we discuss things here for an arbitrary degree $d$ and an arbitrary partition $\mu$ of $d$; be aware that we will specialize to $d=2g-2$ and a partition $\kappa \vdash 2g-2$ in the sequel.

\subsubsection{Abel-Jacobi basics} Recall that the {\em $d^{th}$ symmetric power} 
\[
\Sym^d(C) := C^d / S_d
\]
of a Riemann surface $C$ is defined as the quotient of $C^d$ by the action of the symmetric group permuting the factors, and that $\Sym^d(C)$ is naturally identified with the space of (effective) divisors on $C$ of degree $d$. Recall also from the discussion in \Cref{SS:notation} that for a partition $\mu = \{k_1, \dots, k_p\}$ of $d$, the space $\Conf_\mu(C)$ denotes the space of divisors of type $\mu$; it is a finite cover of the ordinary configuration space $\Conf_p(C)$. 

Recall the existence of the {\em Abel-Jacobi map}
\begin{align*}
    \bar \lambda: \Sym^d(C) &\to \Pic^d(C)\\
    D&\mapsto \cO(D);
\end{align*}
this has fiber $\bar \lambda^{-1}(L) = \abs{L}$. For any partition $\mu$ of $d$, there is a specialization
\[
\lambda_\mu: \Conf_\mu(C) \to \Pic^d(C);
\]
the fiber here is given by
\[
\lambda_\mu^{-1}(L) = \abs{L}_\mu.
\]

\subsubsection{Abel-Jacobi, relative to a divisor}\label{SSS:AJrel} The space $\abs{L}_\mu$ can in turn be understood in the context of an auxiliary configuration space. Consider the map 
    \[
    \pi: \Conf_\mu (C) \to \Conf_{\mu^{nr}}(C),
    \]
    projecting a divisor onto its non-reduced portion. For any $L \in \Pic^{d}(C)$, the fiber of the restriction of $\pi$ to $\abs{L}_\mu \subset \Conf_\mu(C)$ over some $D \in \Conf_{\mu^{nr}}(C)$ is then the space 
    \[
    \abs{L(-D)}_{red}^\circ := \abs{L(-D)}_{red}\setminus H_D
    \]
    of {\em reduced} effective divisors in the linear system $L(-D)$ that also miss the points of $D$; here, we have defined
    \[
    H_D = \bigcup_{i = 1}^n H_{P_i} \subset \abs{L}
    \]
    to be the union of the hyperplanes defining the condition of vanishing at each $P_i \in D$. When $L(-D)$ is very ample, 
    \[
    \abs{L(-D)}_{red} = \abs{L(-D)} \setminus C^\vee,
    \]
    where $C^\vee$ is the dual variety to $C\subset\abs{L(-D)}^\vee$. For notational convenience, define
    \begin{equation}
    \label{eq:deltaLD}
        \Delta_{L,D} = C^\vee \cup H_{P_1} \cup \dots \cup H_{P_n},
    \end{equation}
    so that
    \[
    \abs{L(-D)}_{red}^\circ = \abs{L(-D)} \setminus \Delta_{L,D}.
    \]

There is a further specialization of the Abel-Jacobi map in our present context. Suppose, as usual, that the parts $k_1, \dots, k_p$ of $\mu$ satisfy $k_i \ge 2$ if and only if $i \le n$. Define
\begin{align*}
    \lambda_{\mu,D}: \Conf_{p-n}(C \setminus D) &\to \Pic^{d}(C)\\
    E& \mapsto \cO(E+D).
\end{align*}
Note that the fiber over $L \in \Pic^{d}(C)$ is $\lambda_{\mu,D}^{-1}(L) = \abs{L(-D)}_{red}^\circ$. This extends to a morphism of smooth projective varieties
    \begin{align*}
        \bar \lambda_{\mu,D}: \Sym^{p-n}(C) &\to \Pic^{d}(C)
    \end{align*}
    by the same formula. There is a factorization of $\bar \lambda_{\mu,D}$, via
    \[
        \Sym^{p-n}(C) \xrightarrow{\bar\lambda} \Pic^{p-n}(C) \xrightarrow{\otimes \cO(D)} \Pic^{d}(C),
    \]
    with the first map the usual Abel-Jacobi map and the second the isomorphism given by the twist $L \mapsto L(D)$. 

\subsubsection{Configuration spaces and (simple) braid groups} The fundamental group of a configuration space is a braid group. We adopt the notational convention 
\[
\Br_\bullet(S) := \pi_1(\Conf_\bullet(S)),
\]
where $S$ is a surface and $\bullet$ is either an integer or a partition. The kernel of $\lambda_{\mu,*}: \Br_{\mu}(C) \to H_1(C;\Z)$ is known as a {\em simple braid group}, and has been encountered and studied by many authors \cite{DL,shimada03,walker,QZ}; the kernel of $\lambda_{\mu, D,*}: \Br_{p-n}(C\setminus D) \to H_1(C;\Z)$ will also be called a simple braid group.

\subsection{Statement of main results}

Here we present the main results of the section. We will then assemble some preliminary results in \Cref{SS:dimbounds} before giving the proofs in \Cref{SS:proofs}.

\begin{proposition}\label{lemma:SESlocal}
        Let $g \ge d(\kappa) + 3$. Let $L \in \Pic^{2g-2}(C)$ be given such that $L(-D)$ is very ample for all $D \in \Conf_{\kappa^{nr}}(C)$ and such that $h^0(K_C-L+\cO(D))$ is constant, independent of $D$. Then there is a short exact sequence
        \begin{equation}\label{eqn:SESlocal}
            1 \to \pi_1(\abs{L(-D)}_{red}^\circ) \to \pi_1(\abs{L}_\kappa) \to \Br_{\kappa^{nr}}(C) \to 1,
        \end{equation}
        where $D \in \Conf_{\kappa^{nr}}(C)$ is a general point. In particular, for $g \ge d(\kappa) + n + 4$ and $C \in \cM_g$ sufficiently general, this holds for $L = K_C$. 
    \end{proposition}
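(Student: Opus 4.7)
The plan is to apply Shimada's \Cref{theorem:shimada} to the projection
\[
\pi: \abs{L}_\kappa \to \Conf_{\kappa^{nr}}(C), \qquad D \mapsto D^{nr},
\]
whose fiber over $D^{nr}$ is by definition $\abs{L(-D^{nr})}_{red}^\circ$ as in \Cref{SSS:AJrel}, and then to invoke \Cref{prop:montriv} to kill the Zariski--van Kampen quotient, yielding an honest short exact sequence.

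As geometric setup I would introduce the incidence variety
\[
\cI = \{(D^{nr}, D) \in \Conf_{\kappa^{nr}}(C) \times \abs{L} \mid D \ge D^{nr}\}.
\]
By Riemann--Roch, the hypothesis that $h^0(K_C - L + \cO(D))$ is constant is equivalent to $h^0(L(-D))$ being constant in $D$, so the first projection $\cI \to \Conf_{\kappa^{nr}}(C)$ is a projective bundle, and in particular $\cI$ is smooth. Then $\abs{L}_\kappa$ identifies with the open subvariety of $\cI$ where $D - D^{nr}$ is reduced and disjoint from the support of $D^{nr}$, so $\abs{L}_\kappa$ is smooth and $\pi$ is a smooth morphism; this gives (C1) with $\Sing(\pi) = \emptyset$. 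For (C2), the very ampleness hypothesis makes $\Delta_{L,D^{nr}}$ (see \eqref{eq:deltaLD}) a proper closed subset of the irreducible projective space $\abs{L(-D^{nr})}$, so every fiber is nonempty and irreducible and I can take $\Xi_0 = \emptyset$. For (Z'), $\Conf_{\kappa^{nr}}(C)$ is a finite cover of the aspherical space $\Conf_n(C)$ (Fadell--Neuwirth), so $\pi_2 = 0$. \Cref{theorem:shimada} then produces
\[
1 \to \pi_1(\abs{L(-D)}_{red}^\circ) \sslash K \to \pi_1(\abs{L}_\kappa) \to \Br_{\kappa^{nr}}(C) \to 1.
\]

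To collapse the $\sslash K$, I would apply \Cref{prop:montriv} with compactification $\bar X = \cI$, smooth projective bundle projection $\bar f$, and boundary divisor $W = \cI \setminus \abs{L}_\kappa$ equal to the global union of the relative dual variety of $C$ and the tautological incidence hyperplanes. Condition (C3) is vacuous since $\Sing(\bar f) = \emptyset$, and for (C4) the scheme $W_{D^{nr}} = C^\vee \cup H_{P_1} \cup \cdots \cup H_{P_n} \subset \abs{L(-D^{nr})}$ is a reduced hypersurface, because the $H_{P_i}$ are distinct hyperplanes and $C^\vee$ is reduced irreducible thanks to very ampleness of $L(-D^{nr})$. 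For the ``in particular'' clause with $L = K_C$, one has $h^0(K_C - L + \cO(D)) = h^0(\cO(D))$; for $C$ generic with $g \ge d(\kappa) + n + 4$, a Brill--Noether / Martens-type argument comparing the $n$-dimensional space $\Conf_{\kappa^{nr}}(C)$ against loci of special divisors in $\Pic^{d(\kappa)}(C)$ should yield both $h^0(\cO(D)) \equiv 1$ and very ampleness of $K_C(-D)$.

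The step I expect to be most delicate is the verification of (C4) of \Cref{prop:montriv}: although the fiberwise description of $W_{D^{nr}}$ is transparent, scheme-theoretic reducedness at a general point of each codimension-one component of the discriminant $\Sigma$ requires identifying those components concretely (e.g.\ loci where some $P_i$ lies at a higher-order osculating or inflectionary feature of the embedded curve $C \subset \abs{L(-D^{nr})}^\vee$, or where hyperplanes become tangent to $C^\vee$) and ruling out multiplicity-inducing degenerations there. The Brill--Noether bookkeeping needed for the ``in particular'' clause is a secondary but nontrivial point, as the interplay between the bound $g \ge d(\kappa) + n + 4$ and the genericity of the canonical series with assigned base points is what pins down the final range of applicability.
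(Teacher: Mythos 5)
Your proposal follows the same route as the paper: the same incidence variety $\bar{\abs{L}}_\kappa = \{(D,E) \mid E \ge D\}$ (your $\cI$), the same Riemann--Roch reduction of the $h^0(K_C-L+\cO(D))$ hypothesis to constancy of the fiber dimension, the same verification that the compactified projection is a projective bundle (so $\Sing(\bar f) = \emptyset$, giving (C1) and (C3) for free), the same argument for (C2) and (Z'), and the same appeal to \Cref{prop:montriv}. The ``in particular'' clause is exactly \Cref{lemma:kcminusDva}, which is the Hurwitz-space dimension count you gesture at; you could simply cite it. One small point worth making explicit: the hypothesis $g \ge d(\kappa)+3$ is what guarantees the fibers $\abs{L(-D)}$ have dimension $\ge 1$, which you need both for the bundle structure and for (C2).

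The genuine gap is (C4), and you have correctly located it yourself in your closing paragraph without closing it. Your stated justification --- that $W_{D}$ is reduced ``because the $H_{P_i}$ are distinct hyperplanes and $C^\vee$ is reduced irreducible'' --- only describes the \emph{reduced} structure $\Delta_{L,D}$; condition (C4) concerns the \emph{scheme-theoretic} intersection $W_L \cap \abs{L(-D)}$, which could a priori acquire embedded or non-reduced structure for special $D$ (e.g.\ where the fiber meets $W_L$ non-transversally or components of $\Delta_{L,D}$ degenerate). The paper closes this without ever identifying the codimension-one components $\Sigma_i$ of the discriminant, by a degree-constancy argument: the degree of the reduced structure $\Delta_{L,D} = C^\vee \cup H_{P_1} \cup \dots \cup H_{P_n}$ is independent of $D$ (the degree of the dual variety depends only on $g$ and $\deg L(-D)$, and each $H_{P_i}$ contributes $1$), while the scheme-theoretic intersection $W_L \cap \abs{L(-D)}$ also has constant degree as $D$ varies over $\Conf_{\kappa^{nr}}(C)$; since the two degrees agree for generic $D$, where the intersection is reduced, they agree for all $D$, forcing reducedness of every fiberwise intersection. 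Substituting this argument for your assertion, and replacing your Brill--Noether sketch with the citation of \Cref{lemma:kcminusDva}, turns your outline into the paper's proof.
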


    \begin{proposition}\label{prop:LredSES}
    Let $g \ge d(\kappa) + 6$, and let $L \in \Pic^{2g-2}(C)$ and $D \in \Conf_{\kappa^{nr}}(C)$ be general. Then the sequence
    \[
    1 \to \pi_1 (\abs{L(-D)}_{red}^\circ) \to \Br_{p-n}(C \setminus D) \xrightarrow{\lambda_{\kappa,D,*}} H_1(C;\Z) \to 1
    \]
    is exact.
\end{proposition}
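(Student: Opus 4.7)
The plan is to apply Shimada's Theorem~\ref{theorem:shimada} together with Proposition~\ref{prop:montriv} to the morphism $f := \lambda_{\kappa,D}: X \to Y$, where $X := \Conf_{p-n}(C \setminus D)$ and $Y := \Pic^{2g-2}(C)$. Its natural proper extension is $\bar f := \bar\lambda_{\kappa,D}: \bar X := \Sym^{p-n}(C) \to Y$, and $X = \bar X \setminus W$ for $W$ the reduced hypersurface of divisors which are either non-reduced (the big diagonal) or contain some point of $D$. Writing $d' := p-n = 2g-2-d(\kappa)$, the hypothesis $g \ge d(\kappa)+6$ yields $d' \ge g+4$, which will supply the Brill--Noether slack exploited throughout.

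To verify Shimada's hypotheses: (Z') is immediate since $Y$ is an abelian variety. For (C2), an irreducibility argument in the spirit of \Cref{lemma:irred} shows that for $L$ outside the codim-$(g - d(\kappa))$ ``special'' locus $\{L : K - L + D \text{ is effective}\}$, together with further codim-$\ge 2$ loci where $L(-D)$ fails to be base-point-free or very ample, $\abs{L(-D)} \cong \P^{d'-g}$ is irreducible and $\abs{L(-D)}_{red}^\circ$ is a nonempty, irreducible Zariski-open subset. For (C1), critical points of $\bar f$ coincide with those of the Abel--Jacobi map $\bar\lambda: \Sym^{d'}(C) \to \Pic^{d'}(C)$, namely divisors $E$ with $h^0(\cO(E)) > d'-g+1$; this locus has codim $\ge d'-g+1 \ge 5$ by Brill--Noether.

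To upgrade Shimada's output from a Zariski--van Kampen quotient to the desired short exact sequence, I apply Proposition~\ref{prop:montriv} to each codim-one component $\Sigma_i \subset Y$ of the discriminant. For general $L \in \Sigma_i$, $L(-D)$ will be non-special (the special locus has codim $\ge 6$) and very ample (the non-very-ample locus, bounded by taking a union over pairs $(Q_1,Q_2) \in C^{(2)}$ of loci where $L(-D-Q_1-Q_2)$ is special, has codim $\ge d'-g-2 \ge 2$). Hence $\bar F_L \cap \Sing(\bar f) = \emptyset$, verifying (C3) vacuously, and $W_L = C^\vee \cup H_{P_1} \cup \dots \cup H_{P_n}$ is a reduced hypersurface in $\bar F_L \cong \P^{d'-g}$, verifying (C4). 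Shimada's Theorem then yields
\[
1 \to \pi_1(\abs{L(-D)}_{red}^\circ) \to \Br_{p-n}(C \setminus D) \to \pi_1(\Pic^{2g-2}(C)) \to 1,
\]
and the identification $\pi_1(\Pic^{2g-2}(C)) \cong H_1(C;\Z)$ gives the stated exact sequence.

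The hardest step will be systematically establishing these Brill--Noether codimension estimates: the special locus (codim $g - d(\kappa)$), the non-base-point-free locus (codim $\ge d'-g$), and the non-very-ample locus (codim $\ge d'-g-2$) all need codim $\ge 2$, which holds precisely under $g \ge d(\kappa) + 6$. One must also verify that no additional codim-one strata of $\Sigma$ arise from non-generic incidences of the $H_{P_i}$ with singularities of the dual variety $C^\vee$; such concerns should be absorbed into the codim-$\ge 2$ loci already identified, but verifying this rigorously will require some additional care.
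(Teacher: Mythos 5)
Your proposal is correct and follows essentially the same route as the paper: verify (Z'), (C1), (C2) for $\bar\lambda_{\kappa,D}$ (via the identification $\Sing(\bar\lambda_{\kappa,D})=\Sing(\bar\lambda)$ and the bound $\dim\Sing(\bar\lambda)\le g-1$), then kill the monodromy with Proposition~\ref{prop:montriv} by showing a general $L$ in any codimension-one $\Sigma_i$ has $L(-D)$ nonspecial and very ample, so that (C3) holds vacuously and (C4) follows from reducedness of $C^\vee\cup H_{P_1}\cup\dots\cup H_{P_n}$. The only cosmetic difference is that you derive the very-ampleness codimension bound by a direct Brill--Noether count over pairs of points where the paper cites Shimada's Proposition 5.4.2, and the reducedness in (C4) ultimately rests on the constant-degree argument the paper records in the proof of Proposition~\ref{lemma:SESlocal}.
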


\subsection{Some dimension bounds}\label{SS:dimbounds}

The main results require some bounds on the dimensions of certain loci of divisors and line bundles, which we collect here. Recall that a divisor $D$ on $C$ is {\em special} if $h^0(K_C(-D)) \ge 0$, i.e. if there exists some canonical divisor $(\omega)$ vanishing at the points of $D$. Also recall that a line bundle $L$  is {\em very ample} if the equality $h^0(L(-P-Q)) = h^0(L) - 2$ holds for all pairs of points $P,Q \in C$.

\begin{lemma}\label{lemma:spdim}
    Let $C$ be a compact Riemann surface of genus $g$. Let $\SpSym^d(C) \subset \Sym^d(C)$ denote the locus of special effective divisors of degree $d$, and likewise let $\SpPic^d(C) \subset \Pic^d(C)$ denote the locus of special line bundles of degree $d$. Then $\dim(\SpSym^d(C)) \le g-1$ and hence $\codim(\SpSym^d(C)) \ge d-g+1$, and $\dim(\SpPic^d(C)) \le 2g-2-d$, so $\codim(\SpPic^d(C)) \ge d-g+2$. 
\end{lemma}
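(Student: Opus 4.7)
Both bounds come from parameterizing special divisors (resp.\ line bundles) directly via the canonical system, and reduce to a fiber-dimension count.

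For the bound on $\SpSym^d(C)$, the key observation is that a degree-$d$ effective divisor $D$ is special if and only if there exists some $[\omega] \in |K_C|$ with $D \le (\omega)$. I would form the incidence variety
\[
I = \{(D,[\omega]) \in \Sym^d(C) \times \abs{K_C} \mid D \le (\omega)\}
\]
and consider the two projections. The projection $I \to \abs{K_C}$ has fiber over $[\omega]$ equal to the set of effective subdivisors of the fixed degree-$(2g-2)$ divisor $(\omega)$ of degree exactly $d$; this is a finite (combinatorial) set, so $\dim I \le \dim \abs{K_C} = g-1$. Since $\SpSym^d(C)$ is by construction the image of $I$ under the projection to $\Sym^d(C)$, one gets $\dim(\SpSym^d(C)) \le g-1$. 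The codimension statement follows from $\dim \Sym^d(C) = d$.

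For the bound on $\SpPic^d(C)$, I would use Serre duality: $L \in \Pic^d(C)$ is special iff $h^0(K_C \otimes L^{-1}) \ge 1$, i.e.\ iff $K_C \otimes L^{-1} \cong \cO(E)$ for some effective divisor $E$ of degree $2g-2-d$. In other words, every special line bundle of degree $d$ lies in the image of the composition
\[
\Sym^{2g-2-d}(C) \xrightarrow{\bar\lambda} \Pic^{2g-2-d}(C) \xrightarrow{L' \mapsto K_C \otimes (L')^{-1}} \Pic^d(C).
\]
Hence $\dim(\SpPic^d(C)) \le \dim \Sym^{2g-2-d}(C) = 2g-2-d$, and since $\dim \Pic^d(C) = g$, the codimension bound $\ge d-g+2$ follows.

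\textbf{Expected obstacle.} There is essentially no nontrivial obstacle here; both arguments are elementary once the characterizations via Serre duality and the canonical system are in place. The only minor point to verify is the finiteness of the fibers of $I \to \abs{K_C}$, which is clear because any effective divisor $(\omega)$ of degree $2g-2$ has only finitely many effective subdivisors of degree $d$.
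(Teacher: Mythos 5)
Your proof is correct. For the bound on $\SpSym^d(C)$ you are doing exactly what the paper does: every special divisor of degree $d$ is a subdivisor of a canonical divisor, the canonical system has dimension $g-1$, and each canonical divisor has only finitely many degree-$d$ subdivisors; your incidence variety $I$ just makes this fibration explicit. For the bound on $\SpPic^d(C)$ your route is genuinely different from the paper's. The paper deduces it from the first bound: by Riemann--Roch a special linear system of degree $d$ has dimension at least $d-g+1$, so the fibers of $\bar\lambda$ over $\SpPic^d(C)$ have dimension at least $d-g+1$, and hence $\dim \SpPic^d(C) \le (g-1)-(d-g+1) = 2g-2-d$. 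You instead parameterize special line bundles directly via Serre duality as the image of $\Sym^{2g-2-d}(C)$ under $E \mapsto K_C(-E)$. Both arguments are standard and correct; yours is marginally cleaner in that it does not pass through the symmetric-power bound and it automatically covers special line bundles with $h^0(L)=0$ (though in the only range where the codimension bound is nontrivial, $d \ge g-1$, every special line bundle is effective, so the two notions agree and the paper's argument suffices).
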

\begin{proof}
    The canonical linear system $\abs{K_C}$ has dimension $g-1$, and therefore the set of subdivisors of some canonical divisor of degree $d$ likewise has dimension at most $g-1$, proving the first assertion. If some $D \in \Sym^d(C)$ is special, then so is any linearly equivalent $E \sim D$. By Riemann-Roch, the dimension of a special linear system of degree $d$ is at least $d - g + 1$. 
    This implies that the dimension of the set of special {\em line bundles} drops by at least this much under the Abel-Jacobi map $\bar \lambda: \Sym^d(C) \to \Pic^d(C)$. This proves the second assertion.
\end{proof}

        \begin{lemma}\label{lemma:kcminusDva}
        Let $g \ge d(\kappa) + n + 4$, and let $C \in \cM_g$ be general. Then for all $D \in \Conf_{\kappa^{nr}}(C)$, $K_C(-D)$ is very ample and $h^0(D) = 1$.
    \end{lemma}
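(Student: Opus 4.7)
The plan is to establish the two assertions separately, as statements of genericity in $\cM_g$, and then intersect the resulting open sets. In each case, a failure produces a non-constant meromorphic function $\phi: C \to \P^1$ with constrained polar divisor, and the locus of such $C$ is bounded via a Hurwitz-space dimension count in the style of \Cref{lemma:xicodim}.

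For the claim $h^0(D) = 1$ for all $D \in \Conf_{\kappa^{nr}}(C)$: failure produces a non-constant $\phi \in H^0(\cO(D))$ whose polar divisor $F = (\phi)_\infty$ satisfies $F \le D$, so $\deg(F) \le d(\kappa)$ and $F$ has support in $\{P_1, \ldots, P_n\}$. Setting $\mu := \type(F)$, the partition has at most $n$ parts, so $c(\mu) = \deg(\mu) - \#\mathrm{parts}(\mu) \ge \deg(\mu) - n$. By \Cref{lemma:hurdim}, together with the fact that the $2$-dimensional post-composition action of $\Aff(\C)$ on the target $\P^1$ fixes the polar divisor and hence produces $2$-dimensional fibers over $\cM_g$, the image of $\Hur_g(\mu)$ in $\cM_g$ under $(C, \phi) \mapsto C$ has dimension at most $\dim \Hur_g(\mu) - 2 \le 2g + d(\kappa) + n - 4$, which is strictly less than $3g - 3$ under our hypothesis. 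This is essentially the argument of \Cref{lemma:xicodim}, applied uniformly over the finitely many subtypes that $\mu$ can take.

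For very ampleness of $K_C(-D)$: Riemann--Roch applied to $K_C - D$ and $K_C - D - P - Q$, combined with the already-established $h^0(D) = 1$, shows that the defining condition $h^0(K_C(-D-P-Q)) = h^0(K_C(-D)) - 2$ (for every, possibly equal, pair $P, Q \in C$) is equivalent to $h^0(D + P + Q) = 1$. If the latter fails, the same strategy produces a non-constant $\phi$ with polar divisor $F \le D + P + Q$. The key observation is that $F$ is supported on at most $n+2$ points (the $n$ points of $D$ together with $P$ and $Q$), so $\mu := \type(F)$ has at most $n+2$ parts and $\deg(\mu) \le d(\kappa) + 2$. Applying \Cref{lemma:hurdim} and again subtracting $2$ for the $\Aff(\C)$-action yields image dimension at most
\[
2g + \deg(\mu) + n - 2 \;\le\; 2g + d(\kappa) + n,
\]
which is strictly less than $3g - 3$ exactly when $g \ge d(\kappa) + n + 4$. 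Since there are only finitely many partitions $\mu$ with $\deg(\mu) \le d(\kappa) + 2$ and at most $n+2$ parts, the union of failure loci remains a proper subvariety of $\cM_g$.

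The main technical point is the support bound on the polar divisor $F$ in each case: once one sees that $F$ is supported on at most $n$ (respectively $n+2$) points, the inequality $c(\mu) \ge \deg(\mu) - \#\mathrm{parts}(\mu)$ turns the Hurwitz dimension formula into a bound strong enough to guarantee a proper subvariety. I do not anticipate any further obstacle; the entire argument is a direct extension of \Cref{lemma:xicodim}, with the genus hypothesis tightened exactly to accommodate the two additional points $P, Q$ in the very-ampleness test.
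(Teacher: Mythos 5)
Your argument is correct and takes essentially the same route as the paper's: reduce very ampleness via Riemann--Roch to the condition $h^0(D+P+Q)=1$ for all $P,Q$, and bound the failure locus in $\cM_g$ by the Hurwitz dimension count of \Cref{lemma:hurdim} together with the $2$-dimensional $\Aff(\C)$ post-composition action, exactly as in \Cref{lemma:xicodim}. The only (harmless) difference is bookkeeping: you run the count uniformly over the finitely many types the polar divisor can take when $P$, $Q$ collide with each other or with the support of $D$, whereas the paper works with the generic type $\kappa^{nr}\cup\{1,1\}$ and notes the degenerate cases are handled the same way.
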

    \begin{proof}
        By Riemann-Roch, the very ampleness condition $h^0(K_C(-D-P-Q)) = h^0(K_C(-D))-2$ will hold so long as $h^0(D+P+Q) = 1$ for all $D \in \Conf_{\kappa^{nr}}(C)$ and $P, Q \in C$. Let $\kappa' = \kappa^{nr}\cup \{1,1\}$, so that $D + P + Q$ is (generically) a divisor of type $\kappa'$. Note that $\deg(\kappa') = \deg(\kappa) + 2$ and that $c(\kappa') = c(\kappa)$. By \Cref{lemma:hurdim}, 
        \[
        \dim(\Hur_g(\kappa')) = 2g + d(\kappa) +n + 2.
        \]
        If $C \in \cM_g$ has some $h^0(D+P+Q) \ge 2$, it therefore accounts for a family of dimension at least $2$ in $\Hur_g(\kappa')$, so that the dimension of the locus of such curves is at most $2g+ d(\kappa) + n$. Thus for $2g+ d(\kappa) + n \le 3g-4$, a general curve $C$ has no such divisors, equivalently for $g \ge d(\kappa) + n + 4$. If $h^0(D+P+Q) = 1$ for all $D, P, Q$, then necessarily also $h^0(D) = 1$, so that the second condition follows from the first.
    \end{proof}

We next recall the following results appearing in \cite{shimada}.

    \begin{proposition}[Cf. \cite{shimada}, Proposition 5.1]\label{prop:51shimada}
    Suppose that $g \le d \le 2g-2$. As always, let $\bar \lambda: \Sym^d(C) \to \Pic^d(C)$ denote the Abel-Jacobi map. The following assertions hold:
    \begin{enumerate}
        \item $\Sing(\bar \lambda) = \bar\lambda^{-1}(\bar\lambda(\Sing(\bar \lambda)))$.
        \item $\dim \Sing(\bar \lambda) \le g-1$ and $\dim \bar \lambda(\Sing(\bar \lambda)) \le 2g -2 - d$.
    \end{enumerate}
\end{proposition}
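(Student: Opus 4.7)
The plan is to identify $\Sing(\bar\lambda)$ with the locus $\SpSym^d(C)$ of special effective divisors of degree $d$, and $\bar\lambda(\Sing(\bar\lambda))$ with the locus $\SpPic^d(C)$ of special line bundles. Both parts of the proposition will then follow quickly from \Cref{lemma:spdim}.

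The starting point is a rank computation for the differential $d\bar\lambda_D$ at a point $D \in \Sym^d(C)$ with $L := \cO(D)$. Since the scheme-theoretic fiber $\bar\lambda^{-1}(L) = \abs{L}$ is a smooth projective space of dimension $h^0(L) - 1$, and $\Sym^d(C)$ and $\Pic^d(C)$ are smooth of dimensions $d$ and $g$ respectively, the fiber dimension formula gives
\[
\rank(d\bar\lambda_D) = d - (h^0(L) - 1) = g - h^0(K_C - L),
\]
the second equality via Riemann-Roch. In the range $g \le d \le 2g-2$ the target has dimension $g$, so $\bar\lambda$ is a submersion at $D$ precisely when $L$ is non-special. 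Since this criterion depends only on $L = \bar\lambda(D)$, every fiber lying over a special line bundle is contained in $\Sing(\bar\lambda)$; this both establishes (1) and yields the identifications $\Sing(\bar\lambda) = \SpSym^d(C)$ and $\bar\lambda(\Sing(\bar\lambda)) = \SpPic^d(C)$.

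With these identifications in hand, (2) is a direct application of \Cref{lemma:spdim}, which supplies exactly the bounds $\dim \SpSym^d(C) \leq g - 1$ and $\dim \SpPic^d(C) \leq 2g - 2 - d$.

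The one point that will require care is the fiber dimension computation at non-reduced divisors $D$, where the quotient $C^d \to \Sym^d(C)$ is no longer étale, so the rank of $d\bar\lambda_D$ cannot simply be read off the rank of the unordered Abel-Jacobi map at a generic preimage. I expect this to be tractable by pulling back to the product: the differential of the composite $C^d \to \Sym^d(C) \to \Pic^d(C)$ at any ordered tuple representing $D$ is, via Serre duality, the transpose of the evaluation map $H^0(K_C) \to H^0(K_C|_D)$, whose kernel is precisely $H^0(K_C(-D))$. This reproduces the rank formula $g - h^0(K_C - L)$ uniformly in $D$, legitimizing the identification of $\Sing(\bar\lambda)$ with $\SpSym^d(C)$ on all of $\Sym^d(C)$.
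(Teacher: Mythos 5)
The paper itself supplies no proof of this proposition --- it is quoted directly from Shimada --- so there is nothing internal to compare against; but your architecture (identify $\Sing(\bar\lambda)$ with $\SpSym^d(C)$ and $\bar\lambda(\Sing(\bar\lambda))$ with $\SpPic^d(C)$, then invoke \Cref{lemma:spdim}) is exactly the intended route, and both assertions do follow once the rank formula $\rank(d\bar\lambda_D) = g - h^0(K_C - L)$ is established. Your observation that the criterion for criticality depends only on $L = \bar\lambda(D)$ correctly yields (1), and for the image you implicitly use (correctly) that in the range $d \ge g$ every special $L$ has $h^0(L) \ge d-g+2 > 0$, hence lies in the image of $\bar\lambda$.

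The gap is in your final paragraph, which is precisely the point you flag as requiring care. The codifferential of the composite $C^d \to \Sym^d(C) \to \Pic^d(C)$ at an ordered tuple $(P_1,\dots,P_d)$ sends $\omega \in H^0(K_C)$ to $(\omega(P_1),\dots,\omega(P_d)) \in \bigoplus_i T^*_{P_i}C$; when points repeat, the corresponding coordinates impose the \emph{same} linear condition, so the kernel is $H^0(K_C(-D_{\mathrm{red}}))$, \emph{not} $H^0(K_C(-D))$. Equivalently, $d\pi$ for $\pi: C^d \to \Sym^d(C)$ drops rank along the diagonals (in the local model $(x,y)\mapsto(x+y,xy)$ the Jacobian determinant is $x-y$), so the rank of the composite at a non-reduced tuple is only a lower bound for $\rank(d\bar\lambda_D)$ and does not ``reproduce the rank formula uniformly.'' Two correct repairs: (a) work on $\Sym^d(C)$ itself, where $T_D\Sym^d(C) \cong H^0(\cO_D(D))$ and $d\bar\lambda_D$ is the coboundary $H^0(\cO_D(D)) \to H^1(\cO_C)$ coming from $0 \to \cO_C \to \cO_C(D) \to \cO_D(D) \to 0$, whose kernel is the image of $H^0(\cO_C(D))$, of dimension $h^0(D)-1$, uniformly in $D$ (this is the computation in Arbarello--Cornalba--Griffiths--Harris, Ch.~IV); or (b) simply lean on your first paragraph, which already treats all $D$ at once granted the standard fact that the scheme-theoretic fiber of $\bar\lambda$ over $L$ is the reduced projective space $\P H^0(L)$, so that $\ker(d\bar\lambda_D) = T_D\abs{L}$ has dimension $h^0(L)-1$. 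With either substitution the argument is complete; only the justification offered in your last paragraph needs replacing.
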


\begin{proposition}[Cf. \cite{shimada}, Proposition 5.4.2]\label{prop:54shimada}
    Suppose that $d \ge g+4$. Then there exists a Zariski closed subset $\Xi \subset \Pic^d(C)$ of codimension $\ge 2$ such that $\abs{L}$ is very ample for any $L \in \Pic^d(C) \setminus \Xi$.
\end{proposition}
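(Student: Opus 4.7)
The plan is to characterize the non-very-ample locus in $\Pic^d(C)$ via Serre duality and then bound its dimension by an explicit parameterization. Recall that $L$ of degree $d$ is very ample if and only if $h^0(L(-P-Q)) = h^0(L) - 2$ for every $(P, Q) \in C \times C$, where we allow $P = Q$ to enforce separation of tangent vectors. Applying Riemann-Roch to both $L$ and $L(-P-Q)$ converts this condition into $h^1(L(-P-Q)) = h^1(L)$, which by Serre duality is equivalent to
\[
h^0(K_C - L + P + Q) = h^0(K_C - L).
\]
Strict inequality at some $(P,Q)$, by the standard short exact sequence
\[
0 \to \cO(K_C - L) \to \cO(K_C - L + P + Q) \to \cO(K_C - L + P + Q)|_{P+Q} \to 0,
\]
is equivalent to the existence of an effective divisor $F$ of degree $2g - d$ with $F \sim K_C - L + P + Q$ and $F \not\ge P + Q$. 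Rearranging gives $L \sim K_C + P + Q - F$.

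Therefore the non-very-ample locus $\Xi$ is contained in the image of the morphism
\[
\phi : C \times C \times \Sym^{2g-d}(C) \to \Pic^d(C), \qquad (P, Q, F) \mapsto [K_C + P + Q - F].
\]
The domain has dimension $2 + (2g - d)$, so the Zariski closure $\overline{\im \phi}$ has dimension at most $2g - d + 2$, giving codimension in $\Pic^d(C)$ (of dimension $g$) at least $d - g - 2$. Under the hypothesis $d \ge g + 4$ this is at least $2$, and we take $\Xi := \overline{\im \phi}$. The degenerate range $d \ge 2g + 1$ is automatic (all sufficiently positive line bundles are very ample), and the cases $d \in \{2g - 1, 2g\}$ are covered by the same formula with $\Sym^{2g-d}(C)$ interpreted as a copy of $C$ or as a point.

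The main content is the Serre-duality reformulation of the failure of very ampleness, but this is a classical computation and presents no real obstacle. The only care needed is in the treatment of the diagonal $P = Q$ in $C\times C$, where the exact-sequence argument uses the non-reduced subscheme $2P$ rather than $P+Q$; however, the divisorial conclusion $L \sim K_C + 2P - F$ has the same parameter count and is absorbed into the map $\phi$ by restriction to the diagonal. Once this bookkeeping is complete, the proof reduces to the elementary dimension count above.
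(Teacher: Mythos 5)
The paper does not prove this statement; it is quoted directly from Shimada (Proposition 5.4.2 of \cite{shimada}), so there is no internal proof to compare against. Your argument is correct and self-contained: the Serre-duality reformulation (failure of very ampleness at $(P,Q)$ forces $K_C-L+P+Q$ to be effective, i.e.\ $L\sim K_C+P+Q-F$ with $F\in\Sym^{2g-d}(C)$), the handling of the diagonal via the length-two subscheme, and the dimension count $\dim\overline{\im\phi}\le 2g-d+2$ giving codimension $\ge d-g-2\ge 2$ are all sound, and since the domain of $\phi$ is projective the image is already Zariski closed. This is the same mechanism the paper uses elsewhere (\Cref{lemma:spdim}): your locus is exactly $\{M(P+Q): M\in\SpPic^{d-2}(C),\ P,Q\in C\}$ in dual form, so the argument is the expected one and nothing is missing.
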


\subsection{Proof of main results}\label{SS:proofs}
  \begin{proof}[Proof of \Cref{lemma:SESlocal}]
        We show that $p_L:= \abs{L}_\kappa \to \Conf_{\kappa^{nr}}(C)$ satisfies the criteria of \Cref{theorem:shimada} and \Cref{prop:montriv}. $\Conf_{\kappa^{nr}}(C)$ is well-known to have vanishing higher homotopy groups, so that condition (Z') holds. To show that the remaining hypotheses hold, define
        \[
        \bar{\abs{L}}_\kappa := \{(D,E) \mid E \ge D \} \subset \Conf_{\kappa^{nr}}(C) \times \abs{L},
        \]
        and define
        \[
        \bar{p_L}: \bar{\abs{L}}_\kappa \to \Conf_{\kappa^{nr}}(C)
        \]
        by 
        \[
        \bar{p_L}(D,E) = D.
        \]
        Note that $\bar{p_L}$ is projective with fiber $\bar{p_L}^{-1}(D) = \abs{L(-D)}$, that $\abs{L}_\kappa$ embeds into $\bar{\abs{L}}_\kappa$ via $E \mapsto (E^{nr},E)$, and that $\bar{p_L}$ restricts to $p_L$ on this locus. Following the discussion of \Cref{SSS:AJrel}, note that 
        \[
        W_L:= \bar{\abs{L}}_\kappa \setminus \abs{L}_\kappa
        \]
        is a hypersurface, with one component consisting of the nonreduced divisors in $\abs{L(-D)}$ and additional hyperplane components consisting of divisors in $\abs{L(-D)}$ supported at the points of $D$.

        By Riemann-Roch and the hypothesis that $h^0(K_C-L+\cO(D))$ is constant, the fibers of $\bar{p_L}$ are projective spaces of constant dimension $g-2-d(\kappa) + h^0(K_C-L+\cO(D)) \ge 1$, thereby realizing $\bar{\abs{L}}_\kappa$ as the total space of a fiber bundle over $\Conf_{\kappa^{nr}}(C)$. In particular, $\Sing(\bar{p_L})$ is empty, showing (C1) of \Cref{theorem:shimada} and (C3) of \Cref{prop:montriv}. As the fiber $\abs{L(-D)}_{red}^\circ$ is a Zariski open set in $\abs{L(-D)}$, it is in particular nonempty and irreducible, so that (C2) holds. 
        
        To see that the scheme-theoretic intersection $W_L \cap \abs{L(-D)}$ is necessarily reduced, we first observe that since $\abs{L(-D)}$ is very ample by hypothesis, the reduced scheme structure on the intersection is $\Delta_{L,D}$ as defined in \eqref{eq:deltaLD}. As is well-known (see e.g. \cite[Remark 5.6]{shimada}), the degree of the dual variety $C^\vee \subset \Delta_{L,D}$ depends only on $g$ and $\deg(L(-D))$, and likewise the degree of each remaining component $H_{P_i}$ for $i \ge 1$ is uniformly $1$. Thus the reduced scheme structure on $W_L \cap \abs{L(-D)}$ is of constant degree, and we conclude that $W_L \cap \abs{L(-D)}$ must be reduced for all $D \in \Conf_{\kappa^{nr}}(C)$. This establishes (C4). Thus all hypotheses for \Cref{theorem:shimada} and \Cref{prop:montriv} hold, establishing the existence of the sequence \eqref{eqn:SESlocal} for general $L$.

        In the case $L = K_C$, \Cref{lemma:kcminusDva} shows that for $g \ge d(\kappa) + n + 4$, for $C$ sufficiently general, $K_C(-D)$ is very ample and $h^0(D) = 1$ for all $D \in \Conf_{\kappa^{nr}}(C)$, so that the hypotheses of \Cref{lemma:SESlocal} are satisfied. 
    \end{proof}

\begin{proof}[Proof of \Cref{prop:LredSES}]
    We apply \Cref{theorem:shimada} and \Cref{prop:montriv} to $\lambda_{\kappa,D}$ and its extension $\bar \lambda_{\kappa,D}$. As a first comment, note that $\pi_2(\Pic^{2g-2}(C)) = 0$, so that condition (Z') of \Cref{theorem:shimada} holds. 
    
    We analyze the singular set $\Sing(\bar \lambda_{\kappa,D})$. Since $\otimes \cO(D): \Pic^{p-n}(C) \to \Pic^{2g-2}(C)$ is an isomorphism, $\Sing(\bar \lambda_{\kappa,D}) = \Sing(\bar \lambda)$. 
    Recall that $p -n = 2g-2 - d(\kappa)$. Since $g \ge d(\kappa) + 6$, it follows that $g \le 2g-2-d(\kappa)$, so that \Cref{prop:51shimada} applies to show that $\dim(\Sing(\bar \lambda)) \le g-1$. Again since $g \ge d(\kappa) + 6$, the inequality $g-1 \le (2g-2-d(\kappa)) - 2$ holds, showing that $\Sing(\bar \lambda) \subset \Sym^{2g-2-d(\kappa)}(C)$ has codimension $\ge 2$, as required for condition (C1) of \Cref{theorem:shimada}.

    As the fiber $\lambda_{\kappa,D}^{-1}(L) = \abs{L(-D)}_{red}^\circ$ is a nonempty Zariski open subset of the projective space $\abs{L(-D)}$ of dimension $\ge g-2-d(\kappa) \ge 1$, it is irreducible, so that condition (C2) of \Cref{theorem:shimada} holds. Thus \Cref{theorem:shimada} applies to show that there is a short exact sequence
    \[
    1 \to \pi_1(\abs{L(-D)}_{red}^\circ) \sslash K_L \to \Br_{n-p}(C \setminus D) \to H_1(C;\Z) \to 1
    \]
    for some monodromy group $K_L \le \Aut(\pi_1(\abs{L(-D)}_{red}^\circ))$. 

    We will show that $K_L$ is trivial by appealing to \Cref{prop:montriv}. Recall that this requires certain conditions (C3) and (C4) to be met for a general fiber over some unspecified locus $\Sigma_i \subset \Pic^{2g-2}(C)$ of codimension one. While we can't give an explicit description of $\Sigma_i$, we will be able to verify that the conditions are met for general fibers, which will suffice.

    Condition (C3) stipulates that the intersection $\bar \lambda_{\kappa,D}^{-1}(L) \cap \Sing(\bar \lambda_{\kappa,D})$ be of codimension $\ge 2$ in $\bar \lambda_{\kappa,D}^{-1}(L)$ for a general $L \in \Sigma_i$. By \Cref{prop:51shimada}.1, so long as $L(-D) \in \Pic^{2g-2-d(\kappa)}(C)$ is nonspecial, the fiber $\bar{\lambda}_{\kappa,D}^{-1}(L)$ is disjoint from $\Sing(\bar \lambda_{\kappa,D})$. By \Cref{prop:51shimada}.2, the locus of singular fibers has dimension $\le 2g-2-(p-n) = d(\kappa)$ and hence codimension $\ge g-d(\kappa)$. Since $g \ge d(\kappa) + 6$, a general fiber in any locus $\Sigma_i \subset \Pic^{2g-2}(C)$ of codimension one will therefore be disjoint from $\Sing(\bar \lambda_{\kappa,D})$.

    Condition (C4) stipulates that the scheme-theoretic intersection $\Delta \cap \abs{L(-D)}$, with $\Delta := \Sym^{p-n}(C) \setminus \Conf_{p-n}(C \setminus D)$,
    be reduced for a general $L \in \Sigma_i$. The argument of \Cref{lemma:SESlocal} shows that this holds so long as $\abs{L(-D)}$ is very ample. According to \Cref{prop:54shimada}, so long as $2g-2-d(\kappa) \ge g + 4$, i.e. $g \ge d(\kappa) + 6$, then there is a Zariski closed subset $\Xi \subset \Pic^{2g-2-d(\kappa)}(C)$ of codimension $\ge 2$ so that $\abs{M}$ is very ample for any $M \in \Pic^{2g-2-d(\kappa)}(C) \setminus \Xi$. Let $\Xi' \subset \Pic^{2g-2}(C)$ be the image of $\Xi$ under the isomorphism $\otimes \cO(D): \Pic^{2g-2-d(\kappa)}(C) \to \Pic^{2g-2}(C)$. Then for $g \ge d(\kappa) + 6$ and any $\Sigma_i \subset \Pic^{2g-2}(C)$ of codimension one, a general $L \in \Sigma_i$ will not lie in $\Xi'$, so that $L(-D)$ is very ample and consequently $\Delta \cap \abs{L(-D)}$ is reduced.
\end{proof}

\section{Specializing to the canonical bundle}\label{section:toKC}
Here we show how to transfer the structural results of the previous section from a general line bundle $L$ to the canonical bundle, so long as $C$ itself is sufficiently general. The main result is \Cref{prop:pi1iso}. This will require an analysis of precisely when $C$ is sufficiently general, which is established in \Cref{prop:goodproj}.

\subsection{Existence of a good projection}
By the Zariski hyperplane section theorem, to compute the fundamental group of a hypersurface complement such as $\abs{L(-D)}_{red}^\circ$, it suffices to compute the fundamental group of a plane curve complement obtained by taking a generic $2$-plane section. In the setting of discriminant complements that we consider here, this fundamental group is controlled by the geometry of the immersion $C \to \P^2$ obtained by taking a generic projection of $\abs{L(-D)}^\vee$. For a general line bundle, this projection will have only nodes and cusps, leading Shimada to introduce the notion of {\em Pl\"ucker generality}. In our setting, the hypersurface $\abs{L(-D)} \setminus \abs{L(-D)}_{red}^\circ$ also includes hyperplanes corresponding to the points of $D$; we introduce the notion of {\em sufficiently general position} to characterize when the singularities introduced by these are generic.

\begin{definition}[Pl\"ucker general]
    Let $C \subset \P^2$ be a (reduced, irreducible) plane curve possessing only nodal singularities. $C$ is said to be {\em Pl\"ucker general} if the dual curve $C^\vee$ has only nodes and cusps as singularities.
\end{definition}

\begin{definition}[Sufficiently general position]
\label{def:verygen}
Let $C \subset \P^2$ be a plane curve. A set of points $P_1, \dots, P_n \in C$ is said to be in {\em sufficiently general position} if the following conditions are met:
\begin{enumerate}
    \item Each tangent line $\ell_{P_i}$ to $C$ at $P_i$ has multiplicity $2$ at $P_i$ and is transverse to $C$ away from $P_i$. In particular, $\ell_{P_i}$ intersects $C$ only at smooth points, including $P_i$ itself,
    \item No $\ell_{P_i}$ passes through any other point $P_j$,
    \item No three $\ell_{P_i}$ meet at a common point. 
\end{enumerate}
    
\end{definition}

Define the quantity
\[
\delta(\kappa) := \binom{2g-3-d(\kappa)}{2}-g.
\]

\begin{proposition}
    \label{prop:goodproj}
    Let $g \ge d(\kappa) +  \max\{n + 4, 7\}$. Then for $C \in \cM_g$ and $D \in \Conf_{\kappa^{nr}}(C)$ both sufficiently general, a general projection $p: \abs{K_C(-D)}^\vee \to \P^2$ has image $p(C)$ a Pl\"ucker general curve of degree $2g-2 - d(\kappa)$ with $\delta(\kappa)$ nodes, and moreover the points of $D$ on the dual curve $p(C)^\vee$ are in sufficiently general position in the sense of \Cref{def:verygen}.
\end{proposition}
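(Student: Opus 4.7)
The genus bound splits into two pieces. First, $g \ge d(\kappa)+n+4$ lets me invoke \Cref{lemma:kcminusDva}: for $C \in \cM_g$ generic and every $D \in \Conf_{\kappa^{nr}}(C)$, the bundle $K_C(-D)$ is very ample and $h^0(D) = 1$. Riemann--Roch gives $h^0(K_C(-D)) = g - d(\kappa)$, so the complete embedding realizes $C$ as a smooth nondegenerate curve of degree $2g-2-d(\kappa)$ in $\abs{K_C(-D)}^\vee \cong \P^N$ with $N = g-1-d(\kappa)$. Second, $g \ge d(\kappa)+7$ ensures $N \ge 6$, which leaves ample room for generic linear projections $\P^N \dashrightarrow \P^2$.

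For the nodal-projection, degree, and Pl\"ucker-generality assertions, I appeal to the classical theory of generic projections of smooth curves in $\P^N$ with $N \ge 3$, as packaged for use in Shimada's arrangement-theoretic setting in \cite{shimada}. A general projection $p\colon C \to \P^2$ is an immersion whose image has only ordinary nodes as singularities, and whose dual curve $p(C)^\vee$ has only nodes and cusps. The degree of $p(C)$ is evidently $2g-2-d(\kappa)$, and the genus-degree formula for plane nodal curves pins down the number of nodes as $\binom{2g-3-d(\kappa)}{2} - g = \delta(\kappa)$.

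For the final claim, biduality translates the three conditions of \Cref{def:verygen} into open incidence conditions on the image divisor in $p(C)$. Condition (1) becomes: no image point $p(P_i)$ is a flex of $p(C)$, and no tangent line at such a point is a bitangent. Condition (2) becomes: no tangent line at $p(P_i)$ passes through any other $p(P_j)$. Condition (3) becomes: no three image points are collinear (vacuous for $n \le 2$). Each failure locus is a proper closed subset of the parameter space of triples $(C, D, p)$: the flex and bitangent loci on $p(C)$ are finite, so generic $D$ avoids them; for fixed $P_i$, the tangent at $p(P_i)$ meets $p(C)$ in finitely many points, so generic $D$ avoids (2); and the collinearity condition in (3) defines a codimension-one subvariety of $C^3$, hence is generically avoided.

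The main obstacle is coordinating the three sources of genericity---the curve $C$, the divisor $D$, and the projection $p$---especially for condition (1), which requires the image points of $D$ to avoid the flex/bitangent locus of $p(C)$. I handle this by fixing $C$ and $D$ first and then varying $p$ in the positive-dimensional Grassmannian of projection centers: as $p$ varies, the flex divisor of $p(C)$ (cut out by a Wronskian-type condition) moves in a controlled way, and the large target dimension $N \ge 6$ guarantees a generic $p$ sends $D$ off this locus. With each of the finitely many genericity conditions defining a proper Zariski-closed subset, a generic triple $(C, D, p)$ satisfies them all simultaneously.
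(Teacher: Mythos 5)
Your setup (very ampleness of $K_C(-D)$ via \Cref{lemma:kcminusDva}, the degree of $p(C)$, and the node count from the genus--degree formula) matches the paper, but there is a genuine gap at the heart of the argument: Pl\"ucker generality of a general projection is \emph{not} a consequence of the classical generic-projection theory for an arbitrary smooth curve in $\P^N$, and Shimada's packaging of it applies to a \emph{general} line bundle $L$, not to the specific bundle $K_C(-D)$. Concretely, $p(C)$ is Pl\"ucker general (and the points of $D$ are in sufficiently general position) if and only if the $2$-plane $\Pi \subset \abs{K_C(-D)}$ defining the projection avoids certain loci of degenerate divisors --- divisors $(\omega) \ge D + 4P$, $\ge D + 3P + 2Q$, $\ge D + 2P+2Q+2R$, $\ge D^+ + 2P+2Q$, $\ge D^+ + 3P$, $\ge D^{+3}$ --- and a general $\Pi$ avoids these if and only if each has codimension $\ge 3$ in $\abs{K_C(-D)}$. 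That codimension bound can fail for special pairs $(C,D)$ (e.g.\ when $h^0(D+4P)$ jumps), so one must prove it holds for $(C,D)$ generic in moduli. The paper does this by identifying each bad locus with the fiber of a stratum closure $\P\cH_{\kappa_i'}$ over $\cM_{g,\kappa^{nr}}$ and computing fiber dimensions via the stratum/Hurwitz dimension formulas; the validity condition $d(\kappa_i') \le g-1$ for these augmented partitions (each with $c(\kappa_i') = c(\kappa)+3$ and $n$ increasing by at most $3$) is exactly where the hypothesis $g \ge d(\kappa)+7$ is used. In your proposal this bound only ensures ``$N \ge 6$,'' which does no work, so the quantitative content of the proposition is not actually established.

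Two further points. First, your dualization of condition (1) of \Cref{def:verygen} is incomplete: transversality of $\ell_{P_i}$ to $p(C)^\vee$ away from $P_i$ requires that \emph{no} bitangent or flex-tangent line of $p(C)$ pass through $p(P_i)$ (the paper's divisor types $(\omega)\ge D^+ + 2P+2Q$ and $\ge D^+ +3P$), not merely that the tangent line \emph{at} $p(P_i)$ fail to be a flex or bitangent. Second, your strategy of ``fix $C$ and $D$, then vary $p$'' cannot by itself rescue the argument: if one of the bad loci has codimension $\le 2$ in $\abs{K_C(-D)}$ for the chosen $(C,D)$, then \emph{every} $2$-plane $\Pi$ meets it and no projection works; the genericity of $(C,D)$ in moduli is indispensable, and quantifying it is the real content of the proof.
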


\begin{proof}
    This is essentially a series of dimension bounds: we will see that under our hypotheses on $g$, every condition that obstructs genericity in the sense we require holds only for a positive-codimension set of pairs $(C,D) \in \cM_{g,\kappa^{nr}}$.

    To begin with, it is necessary for the generic $K_C(-D)$ to be very ample. By \Cref{lemma:kcminusDva}, this holds for general $C$ and all $D \in \Conf_{\kappa^{nr}}(C)$ so long as $g \ge d(\kappa)+ n + 4$. 

    We next analyze when a projection $p: \abs{K_C(-D)}^\vee \to \P^2$ fails to yield a Pl\"ucker-general curve $p(C)$. It is classical that a general projection has image a nodal curve; we will assume this going forward without further comment. By duality, a projection corresponds to a choice of subspace $\P^2 \le \abs{K_C(-D)}$. The Pl\"ucker generality of $p(C)$ (or lack thereof) thus corresponds to the absence (or presence) of canonical divisors satisfying certain properties in this choice of $\Pi \cong \P^2 \le \abs{K_C(-D)}$. 
    
    Recall that $p(C)$ is Pl\"ucker general if the dual $p(C)^\vee$ has only nodes and cusps as singularities. A node of $p(C)^\vee$ corresponds to a bitangent of $p(C)$, which in turn corresponds to a canonical divisor of the form $(\omega) = D + 2P + 2 Q + E$ in $\Pi$, with $E$ reduced and supported away from $P,Q$. Likewise, a cusp of $p(C)^\vee$ corresponds to a canonical divisor of the form $(\omega) = D+ 3P + E$ (again with $E$ reduced and not supported at $P$). Thus $p(C)$ will be Pl\"ucker general if and only if every nonreduced divisor in $\Pi$ is of one of these two forms. There are three ways for a divisor to fail these conditions:
    \begin{enumerate}
        \item $p(C)$ could have a hyperflex: we must exclude all $(\omega) \ge D+ 4P$,
        \item $p(C)$ could have a bitangent line passing through a flex: exclude all $(\omega) \ge D+ 3P + 2Q$,
        \item $p(C)$ could have a tritangent line: exclude all $(\omega) \ge D+ 2P + 2Q + 2R$.
    \end{enumerate}

    We next analyze the ways that the points $P_1,\dots,P_n$ of $D$ could fail to be in sufficiently general position on $p(C)^\vee$. We first consider the condition that the tangent lines $\ell_{P_i}$ to $p(C)^\vee$ avoid all singularities of $p(C)^\vee$. Applying duality, a tangent $\ell_{P_i}$ passing through a singularity corresponds to a divisor of the form
        \[
        (\omega) \ge D^{+} +2P + 2Q
        \]
        in the case of a bitangent, and
        \[
        (\omega) \ge D^{+} + 3P,
        \]
        in the case of a flex, where $D^{+}$ denotes a divisor with the same support as $D$ but with larger degree. 
        
        The remaining conditions required for property (1) of \Cref{def:verygen} are easily dispensed with.
        If some tangent line $\ell_{P_i}$ to $P_i$ in $p(C)^\vee$ has multiplicity $\ge 3$, then $P_i$ would correspond to some non-nodal singularity on the original $p(C)$, but these do not exist. Similarly, if such a tangent line is a bitangent on $p(C)^\vee$, then $P_i$ is a node on $p(C)$, but by perturbing the projection, we can ensure that the points $P_i$ avoid the nodes of $p(C)$. 

        Conditions (2) and (3) of \Cref{def:verygen} turn out to have a common dual form. If some $\ell_{P_i}$ passes through some other $P_j$ in $p(C)^\vee$, then dually, the tangent line to $P_j$ in $p(C)$ passes through $P_i$, which happens when $\Pi$ contains some divisor of the form $(\omega) \ge D + P_i + 2P_j$. Similarly, if three $\ell_{P_i}, \ell_{P_j}, \ell_{P_k}$ meet at a common point in the dual plane, then the points $P_i, P_j, P_k$ are collinear on $p(C)$. Both of these conditions can be avoided by excluding all divisors of the form
        \[
        (\omega) \ge D^{+3},
        \]
        where $D^{+3}$ denotes a divisor with the same support as $D$ but with $\deg(D^{+3}) \ge \deg(D) + 3$.    

        To summarize, a pair $(C,D) \in \cM_{g,\kappa^{nr}}$ is suitably generic so long as there exists some $\P^2 \subset \abs{K_C(-D)}$ avoiding all canonical divisors of the following types:
        \begin{enumerate}[label={(D\arabic*)}]
            \item $(\omega) \ge D + 4P$,
            \item $(\omega) \ge D + 3P + 2Q$,
            \item $(\omega) \ge D + 2P + 2Q + 2R$,
            \item $(\omega) \ge D^+ + 2P + 2Q$,
            \item $(\omega) \ge D^+ + 3P$,
            \item $(\omega) \ge D^{+3}$.
        \end{enumerate}
        We claim that for generic $(C,D) \in \cM_{g,\kappa^{nr}}$, divisors of each of these types form a locus of codimension $\ge 3$ in $\abs{K_C(-D)}$. Define
        \[
        X_\kappa = \{((C,D),(\omega))\mid (\omega) \ge D\} \subset \P\Omega_{g,\kappa^{nr}},
        \]
        and note that $X_\kappa$ is the closure of the image of the stratum $\P\cH_\kappa$ in the projectivized Hodge bundle $\P\Omega_{g,\kappa^{nr}}$ under the map sending $(C, (\omega))$ to $((C,(\omega)_{nr}),(\omega))$. Thus $\dim(X_\kappa) = \dim(\P\cH_\kappa) = 4g-4-c(\kappa)$. For $d(\kappa) \le g -1$, the projection $X_\kappa \to \cM_{g,\kappa^{nr}}$ surjects onto the non-hyperelliptic locus in $\cM_{g,\kappa^{nr}}$, so that the generic fiber has dimension 
        \[
        \dim(\abs{K_C(-D)}) = (4g-4-c(\kappa)) - (3g-3+n) = g-1-d(\kappa).
        \]
        
        Each of the six types of divisors above corresponds to the closure of the image of a stratum $\P\cH_{\kappa'_i}$ in $\P\Omega_{g,\kappa^{nr}}$ for one or more partitions $\kappa'_i$ (e.g. (D1) corresponds to the partition $(k_1, \dots, k_n, 4, 1 \dots, 1)$, while (D5) corresponds to the set of partitions of the form $(k_1, \dots, k_i + 1, \dots, k_n, 3, 1, \dots, 1)$). So long as $d(\kappa'_i) \le g - 1$, we can repeat the above analysis and determine the dimension of the fibers over $\cM_{g,\kappa^{nr}}$ and hence the codimension inside the fiber $\abs{K_C(-D)}$ of $X_\kappa \to \cM_{g,\kappa^{nr}}$. Doing so, we find that each type has codimension $3$. Thus, for $(C,D)$ generic, the generic $\P^2 \subset \abs{K(-D)}$ avoids all divisors of type (D1) - (D6) and hence the associated projection $p(C) \subset \P^2$ will be Pl\"ucker general and the points of $D$ will be in sufficiently general position.

        It remains only to analyze the condition $d(\kappa'_i) \le g - 1$ under which this argument is valid. Recall that $d(\kappa) = c(\kappa) + n$, i.e. the sum of the codimension of the associated stratum and the number of nonreduced points. We have already seen that $c(\kappa'_i) = c(\kappa) + 3$ for each $\kappa_i'$; now note that $n$ increases by at most $3$, in the case of (D3). We conclude that $d(\kappa_i') \le g - 1$ holds so long as $g \ge d(\kappa) + 7$.
\end{proof}

\subsection{The Shimada-Severi method}

Following \Cref{lemma:SESlocal}, under suitable hypotheses, the projection $p_L: \abs{L}_\kappa \to \Conf_{\kappa^{nr}}(C)$ realizes $\pi_1(\abs{L}_\kappa)$ as an extension of $Br_{\kappa^{nr}}(C)$; this quotient is independent of the particular $L$. Thus to show that $\pi_1(\abs{K_C}_\kappa) \cong \pi_1(\abs{L}_\kappa)$, it suffices to exhibit an isomorphism of fiber groups $\pi_1(\abs{K_C(-D)}_{red}^\circ) \cong \pi_1(\abs{L(-D)}_{red}^\circ)$.

We accomplish this using a technique we call the {\em Shimada-Severi method}, following the ideas of \cite[Proof of Theorem 1.7]{shimada}. The key idea is that by the Zariski hyperplane section theorem, the fundamental group of a hypersurface complement such as $\abs{L(-D)}_{red}^\circ$ is determined by the corresponding fundamental group of a generic plane section. Under suitable hypotheses on $L$ and $K_C$, the images will be equisingular, here consisting of the dual to a nodal plane curve together with a collection of tangent lines through general points. By Harris' resolution of the Severi problem \cite{harris}, it is possible to find an equisingular deformation connecting $\abs{L(-D)}_{red}^\circ$ and $\abs{K_C(-D)}_{red}^\circ$, thereby establishing an isomorphism of fundamental groups.

\begin{proposition}
    \label{prop:pi1iso}
    Let $g \ge d(\kappa) +  \max\{n + 4, 7\}$. Then for $C \in \cM_g$,  $L \in \Pic^{2g-2}(C)$, and $D \in \Conf_{\kappa^{nr}}(C)$ all sufficiently general, there is an isomorphism 
    \[
    \pi_1(\abs{K_C(-D)}_{red}^\circ) \cong \pi_1(\abs{L(-D)}_{red}^\circ).
    \]
\end{proposition}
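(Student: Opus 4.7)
The plan is to execute the Shimada-Severi method previewed in the section introduction: first apply the Zariski hyperplane section theorem to reduce both fundamental group computations to those of plane configuration complements, then use \Cref{prop:goodproj} to show these two configurations share a common combinatorial type, and finally invoke Harris' irreducibility theorem for the Severi variety to produce an equisingular deformation between them. For step one, I would observe that by the Zariski hyperplane section theorem, for a generic two-plane $\Pi \subset \abs{L(-D)}$ the inclusion
\[
\Pi \setminus (\Pi \cap \Delta_{L,D}) \into \abs{L(-D)}_{red}^\circ
\]
induces an isomorphism on $\pi_1$, and analogously for $L = K_C$. Choosing such a $\Pi$ is dual to choosing a generic projection $p\colon \abs{L(-D)}^\vee \to \P^2$, under which $\Pi \cap C^\vee$ is identified with the dual curve $p(C)^\vee \subset \P^2$ and each hyperplane $H_{P_i}$ cuts $\Pi$ in the tangent line $\ell_{p(P_i)}$ to $p(C)^\vee$ at the image of $P_i$. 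The problem thereby reduces to exhibiting a homotopy equivalence between the two plane configuration complements $\P^2 \setminus (p(C)^\vee \cup \ell_{p(P_1)} \cup \dots \cup \ell_{p(P_n)})$ obtained in the general $L$ and the $L = K_C$ cases.

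For step two, I would apply \Cref{prop:goodproj} to both a generic $L \in \Pic^{2g-2}(C)$ and to $L = K_C$ for a sufficiently general pair $(C,D) \in \cM_{g,\kappa^{nr}}$, with very ampleness of $K_C(-D)$ supplied by \Cref{lemma:kcminusDva}. In each case the image $p(C)$ is a Pl\"ucker-general nodal plane curve of the same degree $2g-2-d(\kappa)$ with the same number $\delta(\kappa)$ of nodes, so its dual $p(C)^\vee$ has the same singularity profile of cusps and nodes; meanwhile \Cref{def:verygen} guarantees that the tangent lines $\ell_{p(P_i)}$ meet $p(C)^\vee$ and each other only in simple transverse crossings away from the cusps and nodes. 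Consequently, the two plane configurations share a common combinatorial, and in particular topological, type.

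For step three, I would invoke Harris' theorem \cite{harris} on the irreducibility of the Severi variety $V$ of reduced irreducible plane curves of degree $2g-2-d(\kappa)$ with exactly $\delta(\kappa)$ nodes, further restricted by the open Pl\"ucker-generality condition. Over $V$, the $n$-fold fibered product of the smooth locus of the dual curve parameterizes choices of $n$ tangent points, and restricting to the Zariski open locus of tuples in sufficiently general position yields an irreducible parameter space $\cS$. By step two, both the general-$L$ and the $K_C$ plane configurations correspond to points of $\cS$, and over $\cS$ the tautological family of plane configurations has constant combinatorial type. Thom's first isotopy lemma then realizes the associated family of complements as a locally trivial fiber bundle, so the fundamental groups of all fibers are isomorphic, yielding $\pi_1(\abs{K_C(-D)}_{red}^\circ) \cong \pi_1(\abs{L(-D)}_{red}^\circ)$ as claimed.

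The main obstacle is confirming that the enhanced parameter space $\cS$ is irreducible and that both configurations genuinely lie in its constant-topological-type locus. The substantive geometric input is Harris' theorem for the nodal Severi variety; the enhancement by $n$ tangent points in sufficiently general position is an open operation preserving irreducibility, while the Pl\"ucker-generality refinement is likewise open. Verifying that the two configurations in question do arise from points of $\cS$ of the same combinatorial type is essentially the content of \Cref{prop:goodproj} combined with the transversality built into the phrase "generic plane section." Subject to these verifications, the isomorphism of fundamental groups follows immediately.
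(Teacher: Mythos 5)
Your proposal follows essentially the same route as the paper: Zariski hyperplane section to reduce to a plane configuration complement, duality to identify the section of $\Delta_{M,D}$ with a Pl\"ucker-general dual curve together with tangent lines at the points of $D$, the numerical matching of degree and node count via \Cref{prop:goodproj}, and Harris' irreducibility of the Severi variety to produce the equisingular deformation. The only difference is that you spell out the final step (the enhanced parameter space of curves with marked tangent points and Thom's first isotopy lemma) more explicitly than the paper, which leaves that deformation argument implicit; this is a faithful elaboration rather than a different approach.
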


\begin{proof}
Recall from the discussion of \Cref{SSS:AJrel} that when a line bundle $M(-D)$ is very ample, $\abs{M(-D)}_{red}^\circ = \abs{M(-D)} \setminus \Delta_{M,D}$ is a hypersurface complement. By the Zariski hyperplane section theorem (see \cite[Theorem 1.56]{cogolludo}, quoting \cite{hamm,GorMac}), for a sufficiently general $\Pi \cong \P^2$, there is an isomorphism
\[
\pi_1(\abs{M(-D)}_{red}^\circ) \cong \pi_1(\Pi \setminus (\Pi \cap \Delta_{M,D})).
\]
Recall that $\Delta_{M,D} = C^\vee \cup H_{P_1} \cup \dots \cup H_{P_n}$. By duality, $\Pi \cap C^\vee$ can be identified with the dual $p_{M(-D)}(C)^\vee$ to $p_{M(-D)}(C) \subset \P^2$ under the projection $p_{M(-D)}: \abs{M(-D)}^\vee \to \Pi^\vee$. The components $\Pi \cap H_{P_i}$ are dual to the points $p_M(P_i) \subset \Pi^\vee$, and so are realized as the {\em tangent lines} $\ell_{P_i}$ to $p_{M(-D)}(C)^\vee$ at the points corresponding to $P_i$.

By \Cref{prop:goodproj}, a sufficiently general projection $p_{K_C(-D)}: \abs{K_C(-D)}^\vee \to \P^2$ realizes $p_{K_C(-D)}(C)$ as a Pl\"ucker-general curve of degree $2g-2-d(\kappa)$ with $\delta(\kappa)$ nodes, and the points $P_i$ are in sufficiently general position in the sense of \Cref{def:verygen}.
A general $L \in \Pic^{2g-2}(C)$ and general $D' \in \Conf_{\kappa^{nr}}(C)$ likewise give rise to a Pl\"ucker general $p_{L(-D')}(C)$ of degree $2g-2-d(\kappa)$ and $\delta(\kappa)$ nodes. Let $S_\kappa$ denote the Severi variety of plane curves of degree $2g-2-d(\kappa)$ with $\delta(\kappa)$ nodes, and let $S_\kappa^\circ$ denote the Zariski open subset of curves that are moreover Pl\"ucker general. By \cite{harris}, $S_\kappa$, and hence $S_\kappa^\circ$, is irreducible, and so there exists an equisingular family of curves containing both $p_{K_C(-D)}(C)^\vee$ and $p_{L(-D')}(C)^\vee$. Since the tangent lines $\{\ell_{P_i}\}$ to $p_{K_C(-D)}(C)^\vee$ are in general position, this can be extended to give an equisingular deformation from $\Pi \cap \Delta_{K_C,D}$ to $\Pi \cap \Delta_{L,D'}$, exhibiting an isomorphism $\pi_1(\Pi \setminus (\Pi \cap \Delta_{K_C,D})) \cong \pi_1(\Pi \setminus (\Pi \cap \Delta_{L,D'}))$.
\end{proof}

\section{Proof of \Cref{theorem:main}}

The proof of \Cref{theorem:main} will require one last lemma, relating the structure of the braid groups associated to the various configuration spaces that have been used.
\begin{lemma}
    \label{lemma:BrSES}
   So long as $g+n \ge 1$, there is a short exact sequence
   \[
   1 \to \Br_{p-n}(C \setminus D) \to \Br_{\kappa}(C) \to \Br_{\kappa^{nr}}(C) \to 1.
   \]
\end{lemma}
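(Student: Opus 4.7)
The plan is to realize the claimed sequence as the long exact sequence in homotopy for a Fadell–Neuwirth type fiber bundle. The natural candidate is the projection
\[
\pi: \Conf_\kappa(C) \to \Conf_{\kappa^{nr}}(C), \qquad E \mapsto E^{nr},
\]
sending a divisor of type $\kappa$ to its nonreduced subdivisor; the fiber over $D$ is exactly $\Conf_{p-n}(C \setminus D)$, so the outer two terms of the putative sequence are forced.

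To verify that $\pi$ is a locally trivial fiber bundle, I would start from the classical Fadell–Neuwirth fibration $\PConf_p(C) \to \PConf_n(C)$ projecting onto the first $n$ coordinates, whose fiber is $\PConf_{p-n}(C \setminus D)$. The subgroup of $S_p$ preserving the type $\kappa$ factors as $\Sym(\kappa^{nr}) \times S_{p-n}$, where the first factor permutes those among the first $n$ positions whose multiplicities coincide and acts compatibly on both the total space and the base, while the second permutes the remaining $p-n$ indices only along the fibers. Quotienting the total space by $\Sym(\kappa^{nr}) \times S_{p-n}$ and the base by $\Sym(\kappa^{nr})$ preserves local triviality and produces precisely $\pi$, with fiber the unordered configuration space $\Conf_{p-n}(C \setminus D)$.

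Granted this, the long exact sequence reads
\[
\cdots \to \pi_2(\Conf_{\kappa^{nr}}(C)) \to \Br_{p-n}(C \setminus D) \to \Br_\kappa(C) \to \Br_{\kappa^{nr}}(C) \to \pi_0(\Conf_{p-n}(C \setminus D)),
\]
and the SES reduces to two vanishing statements. Connectedness of $C \setminus D$ (removal of finitely many points from a connected surface) kills $\pi_0$ of the fiber. For the $\pi_2$ of the base, $\Conf_{\kappa^{nr}}(C)$ is a finite cover of $\Conf_n(C)$ (with deck group the subgroup of $S_n$ that permutes indices of equal multiplicity), so the two spaces have the same higher homotopy groups, and iterating the Fadell–Neuwirth fibrations $\Conf_i(C) \to \Conf_{i-1}(C)$ reduces the question to the asphericity of each fiber $C$ minus finitely many points, which is guaranteed by the hypothesis $g + n \ge 1$ (the only potentially problematic case being $g=0$, which the hypothesis excludes once $n=0$).

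The argument is essentially bookkeeping and no substantive obstacle arises. The one step requiring some care is verifying that local triviality survives the passage to the quotient by the symmetric group actions; after that the short exact sequence falls out of the long exact sequence of a fibration.
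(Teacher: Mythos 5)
Your argument is correct and takes the same route as the paper: the paper likewise observes that the projection $\Conf_{\kappa}(C) \to \Conf_{\kappa^{nr}}(C)$ is a fiber bundle with fiber $\Conf_{p-n}(C\setminus D)$ and extracts the sequence from the long exact sequence in homotopy together with the vanishing of $\pi_2(\Conf_{\kappa^{nr}}(C))$. You merely spell out the Fadell--Neuwirth and symmetric-group-quotient bookkeeping that the paper leaves implicit.
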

\begin{proof}
    The projection $\Conf_{\kappa}(C) \to \Conf_{\kappa^{nr}}(C)$ realizes $\Conf_{\kappa}(C)$ as the total space of a fiber bundle with fiber $\Conf_{p-n}(C\setminus D)$. When $g \ge 1$ or $n \ge 1$ , then $\pi_2(\Conf_{\kappa^{nr}}(C)) = 0$, and the result follows from the long exact sequence of homotopy groups of a fiber bundle.
\end{proof}

\begin{proof}[Proof of \Cref{theorem:main}]
We first comment on the genus bound $g \ge d(\kappa) + \max \{n+4,7\}$. We will invoke four results that include some kind of nontrivial genus bound: \Cref{cor:globalSES} and \Cref{lemma:SESlocal,prop:LredSES,prop:goodproj}. \Cref{cor:globalSES} requires $g \ge \max\{4, d(\kappa) + n + 1\}$, \Cref{lemma:SESlocal} requires $g \ge d(\kappa) + 3$, \Cref{prop:LredSES} requires $g \ge d(\kappa) + 6$, and \Cref{prop:goodproj} requires $g \ge d(\kappa) + \max \{n+4,7\}$. It is clear from inspection that both $d(\kappa) + n + 4$ and $d(\kappa) + 7$ are at least as large as any of the bounds appearing in other parts of the argument. In the sequel, the standing assumption $g \ge d(\kappa) + \max \{n+4,7\}$ will hold, so that every result can be validly invoked.

Let $\rho: \P\cH_\kappa \to \cM_{g,\kappa}$ denote the inclusion. We seek to show that $\rho_*: \pi_1^{orb}(\P\cH_\kappa) \to \Mod_{g,\kappa}$ is injective. By \Cref{cor:globalSES}, there is a short exact sequence
\[
1 \to \pi_1(\abs{K_C}_\kappa)\sslash K \to \pi_1^{orb}(\P\cH_\kappa) \to \Mod_g \to 1.
\]
Combining this with the Birman exact sequence for the projection $\cM_{g,\kappa} \to \cM_g$, there is a commutative diagram
\begin{equation}
    \label{eqn:firststack}
    \xymatrix{
1 \ar[r] & \pi_1(\abs{K_C}_\kappa)\sslash K \ar[r] \ar[d]^{\rho_{C,*}} & \pi_1^{orb}(\P\cH_\kappa) \ar[r] \ar[d]^{\rho_*} &\Mod_g\ar[r] \ar@{=}[d] &1\\
1 \ar[r] & \Br_\kappa(C) \ar[r] & \Mod_{g,\kappa} \ar[r] & \Mod_g \ar[r] &1
}
\end{equation}
To show injectivity of $\rho_*$, it therefore suffices to show injectivity of $\rho_{C,*}: \pi_1(\abs{K_C}_\kappa)\sslash K \to \Br_\kappa(C)$. For this, we will show that the inclusion 
\[
i_C: \abs{K_C}_\kappa \to \Conf_\kappa(C) 
\]
induces an injection
\[
i_{C,*}: \pi_1(\abs{K_C}_\kappa) \into \Br_\kappa(C).
\]
Note that $i_C$ is nothing more than the restriction of $\rho: \P\cH_\kappa \to \cM_{g,\kappa}$ to the fiber over $C$, so that injectivity of $i_{C,*}$ will indeed establish both injectivity of $\rho_{C,*}$ and the triviality of the monodromy group $K$. 

To see that $i_{C,*}$ is an injection, we analyze the Abel-Jacobi map
\[
\lambda_\kappa: \Conf_\kappa(C) \to \Pic^{2g-2}(C).
\]
By \Cref{lemma:SESlocal}, for $C$ sufficiently general, there is a short exact sequence
\[
1 \to \pi_1(\abs{K_C(-D)}_{red}^\circ) \to \pi_1(\abs{K_C}_\kappa) \to \Br_{\kappa^{nr}}(C) \to 1.
\]

This can be combined with the short exact sequence of \Cref{lemma:BrSES} to yield the commutative diagram
\begin{equation}
    \label{eq:secondstack}
    \xymatrix{
1 \ar[r] & \pi_1(\abs{K_C(-D)}_{red}^\circ) \ar[d]^{i_{C,D,*}} \ar[r] & \pi_1(\abs{K_C}_\kappa) \ar[r] \ar[d]^{i_{C,*}} & \Br_{\kappa^{nr}}(C) \ar[r] \ar@{=}[d] & 1\\
1 \ar[r] & \Br_{p-n}(C \setminus D) \ar[r] & \Br_{\kappa}(C) \ar[r] & \Br_{\kappa^{nr}}(C) \ar[r] & 1.
}
\end{equation}

It therefore suffices to show injectivity of the map on fibers 
\[
i_{C,D,*}: \pi_1(\abs{K_C(-D)}_{red}^\circ) \to \Br_{p-n}(C \setminus D).
\]
By \Cref{prop:LredSES}, there is a short exact sequence
\begin{equation}
\label{eq:AJSES}
    1 \to \pi_1(\abs{L(-D)}_{red}^\circ) \to \Br_{p-n}(C \setminus D) \xrightarrow{\lambda_{\kappa,D,*}} H_1(C; \Z) \to 1
\end{equation}
for a general line bundle $L$. In particular, the inclusion $\abs{L(-D)}_{red}^\circ \to \Conf_{p-n}(C\setminus D)$ induces an injection on $\pi_1$.

Since $\abs{K_C(-D)}_{red}^\circ = \lambda_{\kappa,D}^{-1}(K_C)$ is a fiber of $\lambda_{\kappa,D}$, it follows that there is a containment 
\[
\im(i_{C,D,*}) \le \pi_1(\abs{L(-D)}_{red}^\circ) \cong \ker \lambda_{\kappa,D,*}.
\]
We claim that $i_{C,D,*}$ surjects onto $\ker \lambda_{\kappa,D,*}$. By \cite[Theorem A]{strata3}, for $g \ge 5$, the map $\pi_1^{orb}(\P\cH_\kappa) \to \Mod_{g,\kappa}$ has image $\Mod_{g,\kappa}[\bar \phi] \le \Mod_{g,\kappa}$ an {\em absolute framed mapping class group}.\footnote{The result in \cite[Theorem A]{strata3} is formulated at the level of the unprojectivized stratum $\cH_\kappa$, but the map $\cH_\kappa \to \cM_{g,\kappa}$ inducing the homomorphism studied in \cite[Theorem A]{strata3} factors through $\P \cH_\kappa$.} According to \cite[Corollary 4.2]{CImonodromy}, the Birman exact sequence for the forgetful map $\Mod_{g,\kappa} \to \Mod_g$ restricts to give the short exact sequence
\[
1 \to \ker(\lambda_{\kappa,*}) \to \Mod_{g,\kappa}[\bar \phi] \to \Mod_g \to 1.
\]
Re-examining \eqref{eqn:firststack}, this shows that 
\[
i_{C,*}: \pi_1(\abs{K_C}_\kappa) \to \Br_\kappa(C)
\]
has image $\im(i_{C,*}) = \ker(\lambda_{\kappa,*})$. The sequence
\[
1 \to \Br_{p-n}(C \setminus D) \to \Br_{\kappa}(C) \to \Br_{\kappa^{nr}}(C) \to 1 
\]
of \Cref{lemma:BrSES} specializes to 
\[
1 \to \ker(\lambda_{\kappa,D,*}) \to \ker(\lambda_{\kappa,*}) \to \Br_{\kappa^{nr}}(C) \to 1;
\]
re-examining \eqref{eq:secondstack} then shows that $\im(i_{C,D,*}) = \ker \lambda_{\kappa,D,*}$ as claimed.

By \Cref{prop:pi1iso}, there is a group isomorphism 
\[
f:  \pi_1(\abs{L(-D)}_{red}^\circ) \cong \pi_1(\abs{K_C(-D)}_{red}^\circ).
\]
Altogether, then, we find that $f \circ i_{C,D,*}$ induces a surjective endomorphism of $\pi_1(\abs{K_C(-D)}_{red}^\circ)$. On the other hand, the isomorphism $f^{-1}$ and \eqref{eq:AJSES} together realize $\pi_1(\abs{K_C(-D)}_{red}^\circ)$ as a subgroup of $\Br_{p-n}(C\setminus D)$. The latter is a subgroup of the residually finite group $\Mod_{g,\kappa}$, and hence $\pi_1(\abs{K_C(-D)}_{red}^\circ)$ is likewise residually finite. Since $\abs{K_C(-D)}_{red}^\circ$ is a quasiprojective variety, its fundamental group is finitely generated. Every fintely generated residually finite group is Hopfian, so that the surjective endomorphism $f \circ i_{C,D,*}$ must be an isomorphism. Thus $i_{C,D,*}$ is an injection.
\end{proof}

\bibliography{bib}{}
\bibliographystyle{alpha}

\end{document}